\documentclass[10pt, english]{smfart}
\usepackage{amssymb}
\usepackage{amsmath}
\usepackage{amsthm}
\usepackage{mathrsfs}
\usepackage{graphicx}
\usepackage{color}
\usepackage{manfnt}
\usepackage[utf8]{inputenc}   
\usepackage{geometry}         
\usepackage[english]{babel}  
\usepackage{verbatim}

\usepackage[all]{xy}
\usepackage{hyperref}

\synctex=1
\vfuzz2pt 
\hfuzz2pt 

\newtheorem{thm}{Theorem}[section]
\newtheorem{cor}[thm]{Corollary}

\theoremstyle{definition}
\newtheorem{rem}[thm]{Remark}
\newtheorem{remarks}[thm]{Remarks}

\newtheorem{defn}[thm]{Definition}

\newtheorem*{thms}{Theorem}

\theoremstyle{remark}

\numberwithin{equation}{section}

\newcommand{\abs}[1]{\left\vert#1\right\vert}

\newcommand{\ord}{\text{ord}}
\newcommand{\ac}{\overline{\text{ac}}}

\newcommand{\Spec}{\text{Spec}}
\newcommand{\A}{\mathbb A}
\newcommand{\PP}{\mathbb P}
\newcommand{\C}{\mathbb C}

\newcommand{\Z}{\mathbb Z}

\newcommand{\ra}{\rightarrow}

\newcommand{\Var}{\text{Var}}

\newcommand{\mes}{\text{mes}}

\newcommand{\an}{\mathrm{an}}
\renewcommand{\k}{k}
\definecolor{vert}{rgb}{0,0.6,0.2}

\newcommand{\F}{\mathcal F}

\newcommand{\Ubeth}{\big(\widehat{U_R}\big)^\beth}
\newcommand{\mot}{\mathrm{mot}}

\newcommand{\Vol}{\text{Vol}}

\newcommand{\X}{\mathcal{X}}

\title{Motivic and analytic nearby fibers at infinity and bifurcation sets}

\author{Lorenzo Fantini}
\address{Aix-Marseille Université, Institut de Mathématiques de Marseille, 13453 Marseille, France}
\email{\href{mailto:lorenzo.fantini@univ-amu.fr}{lorenzo.fantini@univ-amu.fr}}
\urladdr{\url{https://www.i2m.univ-amu.fr/perso/fantini.l/}}

\author{Michel Raibaut} 
\address{Laboratoire de Math\'ematiques\\ Univ. Grenoble Alpes, Université Savoie Mont Blanc, B\^atiment Chablais, Campus Scientifique, 
Le Bourget du Lac, 73376 Cedex, France} 
\email{\href{mailto:michel.raibaut@univ-smb.fr}{michel.raibaut@univ-smb.fr}}
\urladdr{\url{www.lama.univ-savoie.fr/~raibaut}}

\begin{document}

\begin{abstract} 
	In this paper we use motivic integration and non-archimedean analytic geometry to study the singularities at infinity of the fibers of a polynomial map $f\colon \A^d_\C \to \A^1_\C$.
	We show that the motive $S_{f,a}^{\infty}$ of the motivic nearby cycles at infinity of $f$ for a value $a$ is a motivic generalization of the classical invariant $\lambda_f(a)$, an integer that measures a lack of equisingularity at infinity in the fiber $f^{-1}(a)$. 
	We then introduce a non-archimedean analytic nearby fiber at infinity $\mathcal F_{f,a}^{\infty}$ whose motivic volume recovers the motive $S_{f,a}^{\infty}$. 
	With each of $S_{f,a}^{\infty}$ and $\mathcal F_{f,a}^{\infty}$ can be naturally associated a bifurcation set; we show that the first one always contains the second one, and that both contain the classical topological bifurcation set of $f$ if $f$ has isolated singularities at infinity. 
\end{abstract}

\maketitle

\today
\tableofcontents

\section{Introduction}

Let $U$ be a smooth and irreducible complex algebraic variety and let $f\colon U \to \A^1_\C$ be a dominant map.
It is well known that there exists a finite subset $B$ of $\A^1_\C(\C)$ such that the map
\[
f|_{U \setminus f^{-1}(B)} \colon U \setminus f^{-1}(B) \longrightarrow \A^1_\C \setminus B
\]
is a locally trivial $C^\infty$-fibration, see for example \cite{Pha83a}.
The smallest such subset $B$ is called the \emph{topological bifurcation set} of $f$ and is denoted by $B_f^\mathrm{top}$.
The topological bifurcation set of $f$ contains the discriminant $\mathrm{disc}(f)$ of $f$, that is the set $f\big(\mathrm{crit}(f)\big) \subset \A^1_\C(\C)$ image of its set of critical points, but the inclusion might be strict since $f$ needs not to be proper.
For example, the map $f \colon \A^2_\C \to \A^1_\C$ defined by the polynomial $f(x,y)=x(xy-1)$ is smooth, but has bifurcation set $\{0\}$; its fiber over $0$ is the only one to be disconnected.
This is related to the fact that, loosely speaking, $f(x,y)$ may have some ``critical points at infinity''; one way of seeing this is to observe that the compactification of the fiber $f^{-1}(a)$ in $\PP^2_\C$ will have a multibranch singularity for $a = 0$, and a cuspidal singularity otherwise.

Assume that $f \colon \A^d_\C \to \A^1_\C$ is a polynomial map.
In some special cases, knowing the (compactly supported) Euler characteristic of a fiber $f^{-1}(a)$ is sufficient to determine whether $a$ belongs to $B_f^\mathrm{top}$.
Indeed, if we denote by $f^{-1}(a_\mathrm{gen})$ a general fiber of $f$, we have 
\begin{equation}\label{eq_Bftop_Euler_char}
B_{f}^\mathrm{top} = \big\{ a \in \A^1_\C(\C)  \;\big|\; \chi_c\big(f^{-1}(a)\big) \neq \chi_c\big(f^{-1}(a_\mathrm{gen})\big)\big\}
\end{equation}
in the case of curves, that is whenever $d=2$, see \cite{VuiTra84a}, and, more generally, whenever $f$ has \emph{isolated singularities at infinity}  (that is, the closure of a general fiber $f^{-1}(a_\mathrm{gen})$ of $f$ in $\PP^d_\C$ has isolated singularities), see \cite{Par95a}.

If the polynomial $f\in\C[x_1,\ldots,x_d]$ has isolated singularities in $\A^d_\C$ (which does not necessarily mean that it has also isolated singularities at infinity), then Artal Bartolo--Luengo--Melle-Hern{{\'a}}ndez proved in \cite{ArtLueMel00b} that the Euler characteristic of each fiber $f^{-1}(a)$ can be computed in terms of the Euler characteristic of $f^{-1}(a_\mathrm{gen})$ and of some local numerical invariants, as we will now explain.
To account for the singularities of the fibers, let $\mu_a(f)$ be the sum of the Milnor numbers of the singular points of $f^{-1}(a)$, so that $a$ belongs to the discriminant of $f$ if and only if $\mu_a(f)>0$, and set $\mu(f) =\sum_{a \in \mathbb A^1_{\mathbb C}} \mu_a(f)$.
On the other hand, to keep track of the behavior of $f$ at infinity, consider $\lambda_a(f) = \mu\big(\mathbb P^n, \overline{f^{-1}(a)},D\big) - \mu\big(\mathbb P^d, \overline{f^{-1}(a_\mathrm{gen}}),D\big)$, where $\overline{f^{-1}(a)}$ is the closure $f^{-1}(a)$ in $\mathbb P^d$, $D$ is the divisor cut out by the homogeneous part of $f$ of degree $\mathrm{deg}(f)$ in the hyperplane at infinity of $\mathbb P^d$, and $\mu\big(\mathbb P^n, \overline{f^{-1}(b)},D\big)$ is the generalized Milnor number of Parusi\'nski from \cite{Par88}.
The invariants $\lambda_a(f)$ vanish for all but finitely many values of $a$, therefore we can set $\lambda(f) =\sum_{a \in \mathbb A^1_{\mathbb C}} \lambda_a(f)$.
By \cite[Theorem 1.7]{ArtLueMel00b}, we then have the following equalities:
	\begin{align} 
	& \chi_{c}\big(f^{-1}(a)\big) = \chi_c\big(f^{-1}(a_\mathrm{gen})\big) + (-1)^{d}\big(\mu_a(f) + \lambda_a(f)\big),
	\label{eq:lambda} \\
	& \chi_c\big(f^{-1}(a_\mathrm{gen})\big) = 1 + (-1)^{d-1}\big(\mu(f) + \lambda(f)\big).
	\label{eq:caracteristiquefibregenerique}
	\end{align}
Similar results were obtained in \cite{Tib07a} and \cite{Par97a}, and in the case of curves in \cite{Suz74}, \cite{VuiTra84a}, and \cite{Cas96}.
In particular, whenever the condition \ref{eq_Bftop_Euler_char} also holds one can deduce that
	\[
	B_{f}^\mathrm{top} = \big\{a\in \mathbb A^1_{\C}(\C) \;\big|\; \lambda_a(f) \neq 0\big\} \cup \text{disc}(f)\;.
	\]

The invariant $\lambda_a(f)$ gives some rudimentary measure of the lack of equisingularity of $f$ at infinity at the value $a$.
Since it is defined using Euler characteristics, it is natural to expect to be able to generalize $\lambda_a(f)$ to a motive in a Grothendieck ring of $\C$-varieties by using Denef and Loeser's motivic integration.
In that direction, and with the goal of studying singularities at infinity, the second author defined in \cite{Rai11} a motive $S_{f,a}^{\infty}$, the so called \emph{motivic nearby cycles at infinity of $f$ for the value $a$}, living in a localization $\mathcal M_{\C}^{\hat{\mu}}$ of the Grothendieck ring of $\C$-varieties with a good $\hat{\mu}$-action.
In order to do so, he used motivic integration techniques, following work of Denef--Loeser \cite{DenLoe98b}, Bittner \cite{Bit05a}, and Guibert--Loeser--Merle \cite{GuiLoeMer06a}.
This allowed him to define a \emph{motivic bifurcation set} $B_f^\mathrm{mot}$ of $f$ as the union of the discriminant of $f$ with the set of values whose motivic nearby cycles at infinity do not vanish.
This is a finite set, and it coincides with $B_f^\mathrm{top}$ if $f$ is a convenient and non degenerate polynomial with respect to its Newton polygon at infinity (in the sense of \cite{Kou76a}), as is shown in \cite[Theorem 4.8]{Rai11}.
However, no explicit link between $S_{f,a}^\infty$ and $\lambda_a(f)$ was established in \cite{Rai11}.

The first contribution of this paper is to prove that $S_{f,a}^{\infty}$ is indeed a motivic generalization of the numerical invariant $\lambda_a(f)$, whenever the latter is defined.
In particular, we obtain the following result:
\begin{thms} 
	Let $f$ be a polynomial in $\C[x_1,\dots,x_d]$ with isolated singularities in $\A^d_\C$. 
	For any value $a$ in $\A^1_\C(\C)$, we have 
	\begin{equation*} \label{thm-intro:lambda}
		\chi_{c}\left(S_{f,a}^{\infty}\right) = (-1)^{d-1}\lambda_a(f)\,.
	\end{equation*}
\end{thms}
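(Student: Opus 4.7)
The plan is to identify $\chi_c(S_{f,a}^\infty)$ with the ``contribution at infinity'' in the Euler-characteristic drop between the general fiber $f^{-1}(a_\mathrm{gen})$ and the special fiber $f^{-1}(a)$, and then to match this contribution with $(-1)^{d-1}\lambda_a(f)$ via the Artal Bartolo--Luengo--Melle-Hern\'andez identity \eqref{eq:lambda}.

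Concretely, I would first fix a smooth projective compactification $\bar X$ of $\A^d_\C$ on which $f$ extends to a proper morphism $\bar f \colon \bar X \to \PP^1_\C$ with a simple normal crossings boundary divisor $D_\infty = \bar X \setminus \A^d_\C$ (obtained, for instance, by resolving the indeterminacy of the rational map $\PP^d_\C \dashrightarrow \PP^1_\C$ that extends $f$). Using the additivity of motivic nearby cycles (Guibert--Loeser--Merle, Bittner) on $\bar X$, the total motivic vanishing fiber of $\bar f - a$ decomposes as a sum of local contributions, one for each point of $\bar f^{-1}(a)$. By the construction of $S_{f,a}^\infty$ in \cite{Rai11}, the contributions coming from $D_\infty \cap \bar f^{-1}(a)$ assemble into $S_{f,a}^\infty$, while the contributions coming from $f^{-1}(a)$ are the local Denef--Loeser motivic Milnor fibers $\mathcal S_{f - a, x}$, which by assumption are concentrated at the finitely many isolated critical points $x \in \mathrm{Sing}(f^{-1}(a))$.

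Applying $\chi_c$ to this decomposition, the Denef--Loeser formula gives $\chi_c(\mathcal S_{f - a, x}) = (-1)^{d-1}\mu_x(f)$ at each isolated critical point, so that the finite contributions add up to $(-1)^{d-1}\mu_a(f)$. On the other hand, $\chi_c$ of the total motivic vanishing fiber of $\bar f - a$ equals $\chi_c(\bar f^{-1}(a_\mathrm{gen})) - \chi_c(\bar f^{-1}(a))$, and carefully comparing the boundary pieces $D_\infty \cap \bar f^{-1}(a)$ and $D_\infty \cap \bar f^{-1}(a_\mathrm{gen})$ (which agree up to a contribution whose compactly supported Euler characteristics cancel, since the divisor at infinity is essentially governed by the leading homogeneous form of $f$) reduces this to $\chi_c(f^{-1}(a_\mathrm{gen})) - \chi_c(f^{-1}(a))$. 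By \eqref{eq:lambda}, this affine jump equals $(-1)^{d-1}\big(\mu_a(f) + \lambda_a(f)\big)$, and subtracting the finite contribution gives $\chi_c(S_{f,a}^\infty) = (-1)^{d-1}\lambda_a(f)$.

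The hard part will be the comparison of boundary contributions: one must control precisely how the choice of compactification influences $\chi_c(D_\infty \cap \bar f^{-1}(a))$ and verify that the difference with its generic analogue cancels, so that the jump on $\bar X$ truly reduces to the affine jump on $\A^d_\C$. This will likely benefit either from a direct arc-theoretic computation of $\chi_c(S_{f,a}^\infty)$ intrinsic to the definition in \cite{Rai11}, which would sidestep the compactification altogether, or from a careful resolution-theoretic analysis of $\bar X$ along $D_\infty$. The standard normalization ambiguities between full and reduced motivic Milnor fibers also need to be tracked throughout to ensure the signs line up correctly.
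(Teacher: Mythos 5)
Your proposal contains a genuine gap centered on what $S_{f,a}^\infty$ actually is. You write that ``the contributions [to the nearby cycles of $\bar f - a$] coming from $D_\infty \cap \bar f^{-1}(a)$ assemble into $S_{f,a}^\infty$,'' but this misreads the definition. By construction, $S_{f,a}^\infty = \hat f_!\big(S_{\hat f - a, U} - i_! S_{f-a}\big)$, and by the additivity formula in Remark~\ref{remarks_SfU}~(\ref{rem:additivity}) one has $S_{\hat f - a, U} = S_{\hat f - a} - i_{F!} S_{(\hat f - a)|_F}$ where $F = D_\infty$. So $S_{f,a}^\infty$ is \emph{not} just the part of $S_{\hat f - a}$ lying over the boundary: it carries an extra subtraction of the motivic nearby cycles of $\hat f - a$ \emph{restricted to} $F$ (equivalently, arcs drawn entirely inside $D_\infty$ must be removed). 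Your paragraph that promises the boundary pieces ``agree up to a contribution whose compactly supported Euler characteristics cancel, since the divisor at infinity is essentially governed by the leading homogeneous form of $f$'' is a heuristic replacement for this subtraction, but it is neither correct nor sufficient as stated: the relevant identity is $\chi_c\big(\hat f_! i_{F!} S_{(\hat f - a)|_F}\big) = \chi_c\big(D_\infty \cap \bar f^{-1}(a_\mathrm{gen})\big)$, which is a substantive fact (the Denef--Loeser realization applied to the proper morphism $\hat f|_F$), not a formal cancellation. Without this piece of the argument, the reduction from the jump on $\bar X$ to the affine jump on $\A^d_\C$ does not go through.

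The paper sidesteps all of this by applying $\chi_c$ directly to the defining identity $S_{f,a}^\infty = \hat f_!\big(S_{\hat f - a, U}\big) - f_!\big(S_{f-a}\big)$ and then computing $\chi_c\big(\hat f_! S_{\hat f - a, U}\big) = \chi_c\big(f^{-1}(a_\mathrm{gen})\big)$ in one shot: the Guibert--Loeser--Merle/Bittner realization result identifies $\hat f_! S_{\hat f - a, U}$ with the class $\big[R\hat f_! \psi_{\hat f - a}(Ri_! \underline{\mathbb{Q}}_U)\big]$, proper base change for sheaf nearby cycles converts this to $\big[\psi_{t-a}(Rf_! \underline{\mathbb{Q}}_U)\big]$, and the Euler characteristic of the latter is that of a general fiber. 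The extension by zero $Ri_! \underline{\mathbb{Q}}_U$ encodes exactly the ``supported on $U$'' correction that your arc-decomposition approach must track by hand. Once you have $\chi_c(S_{f,a}^\infty) = \chi_c\big(f^{-1}(a_\mathrm{gen})\big) - \chi_c(f_! S_{f-a})$, the rest of your plan (decomposing $S_{f-a}$ over the smooth part versus the isolated critical points, applying $\chi_c(S_{f,x} - 1) = (-1)^{d-1}\mu(f,x)$, and invoking \ref{eq:lambda}) matches the paper. Your decomposition-by-arc-origin strategy can be salvaged, but you must replace the hand-waving about boundary cancellation with the correct identification of $S_{f,a}^\infty$ in terms of $S_{\hat f - a}$ and $S_{(\hat f - a)|_F}$, and then apply the proper-map Euler-characteristic identity to $\hat f|_F$ to handle the latter.
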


We obtain this result as a consequence of a more general equality between {Euler characteristic of} motives, proven in Theorem~\ref{thm:lambda}, which in particular does not require $f$ to have isolated singularities.
As an application, we deduce the following fact:
\begin{thms} 
Let $f$ be a polynomial in $\C[x_1,\dots,x_d]$. 
Then we have
\begin{equation*}
\big\{ a \in \A^1_\C(\C)  \;\big|\; 
\chi_c\big(f^{-1}(a)\big) \neq \chi_c\big(f^{-1}(a_\mathrm{gen})\big)
\big\}
\subset
B_f^\mathrm{mot}\;.
\end{equation*}
\end{thms}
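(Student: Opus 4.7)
The plan is to argue by contraposition: I show that whenever $a \in \A^1_\C(\C)$ lies outside $B_f^\mathrm{mot}$, the equality $\chi_c\big(f^{-1}(a)\big) = \chi_c\big(f^{-1}(a_\mathrm{gen})\big)$ holds.

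By the definition of the motivic bifurcation set recalled in the introduction, the hypothesis $a \notin B_f^\mathrm{mot}$ is equivalent to two conditions: on the one hand $a \notin \mathrm{disc}(f)$, so that the fiber $f^{-1}(a)$ contains no critical point of $f$ in $\A^d_\C$; on the other hand $S_{f,a}^\infty = 0$ in $\mathcal M_\C^{\hat\mu}$, and in particular $\chi_c(S_{f,a}^\infty) = 0$.

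The key input is Theorem~\ref{thm:lambda}, which refines the classical identity \eqref{eq:lambda} into a motivic equality valid without any assumption on the singular locus of $f$. Taking the Euler characteristic realization of that identity expresses the difference $\chi_c\big(f^{-1}(a)\big) - \chi_c\big(f^{-1}(a_\mathrm{gen})\big)$ as $(-1)^d$ times the sum of $\chi_c(S_{f,a}^\infty)$ and of a contribution supported on the (possibly non-isolated) singular locus of $f$ inside $f^{-1}(a) \cap \A^d_\C$. When $a \notin \mathrm{disc}(f)$ this second contribution vanishes for trivial reasons, since its support is empty; combining this with $\chi_c(S_{f,a}^\infty) = 0$ yields the desired equality.

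The only genuine ingredient needed is the availability of Theorem~\ref{thm:lambda} in the stated generality, which encapsulates the technical work of separating, at the motivic level, the affine singular contribution from the contribution at infinity; once this is granted, the deduction is a bookkeeping exercise requiring no isolated singularity hypothesis on $f$, and the inclusion of the \textquotedblleft Euler characteristic bifurcation set\textquotedblright{} into $B_f^\mathrm{mot}$ follows immediately by contraposition.
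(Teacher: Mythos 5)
Your argument is correct and is essentially the paper's proof stated in contrapositive form: the paper splits into the cases $a \in \mathrm{disc}(f)$ (trivial) and $a \notin \mathrm{disc}(f)$, then invokes part \ref{thm:lambda_2} of Theorem~\ref{thm:lambda} to conclude directly that $\chi_c(S_{f,a}^{\infty}) \neq 0$, hence $S_{f,a}^{\infty} \neq 0$. The sign you write as $(-1)^d$ in front of $\chi_c(S_{f,a}^\infty)$ is not quite right (part \ref{thm:lambda_2} of the theorem gives $\chi_c(f^{-1}(a_\mathrm{gen})) - \chi_c(f^{-1}(a)) = \chi_c(S_{f,a}^{\infty})$, so the coefficient is $-1$, not $(-1)^d$), but since the argument only uses vanishing versus non-vanishing this imprecision is harmless.
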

In particular, $B_f^\mathrm{mot}$ contains $B_f^\mathrm{top}$ whenever the condition \ref{eq_Bftop_Euler_char} is satisfied, for example if $f$ has isolated singularities at infinity.

Let us briefly explain the definition of $S_{f,a}^{\infty}$ and give a short outline of the proof of the theorems.
We can work in a slightly more general setting and assume that $k$ is a field of characteristic zero containing all roots of unity, $U$ is a smooth $k$-variety, and $f\colon U\to \A^1_k$ is a dominant morphism.
We consider a compactification $(X,\hat{f})$ of $(U,f)$, that is a $k$-variety $X$ containing $U$ as a dense open subset, together with a proper extension $\hat{f}$ of $f$ to $X$.
As is usual in motivic integration, the motive $S_{f,a}^{\infty}$ is defined as a limit of a generating series $\sum \mes(X_i)T^i$, where $\mes(X_i)$ is the motivic measure of a subset $X_i$ of the scheme of arcs $\mathcal L(X)$ of $X$.
Namely, $X_i$ consists of arcs that have origin at infinity, that is in {$X\setminus U$}, order of contact $i$ along the fiber $\hat{f}^{-1}(a)$, and are only mildly tangent to {$X\setminus U$}.
As such, $S_{f,a}^{\infty}$ can be expressed as the difference of the images in $\mathcal M_k^{\hat{\mu}}$ of $S_{\hat{f}-a,U}$, the motivic nearby cycles of $\hat{f}-a$ supported by $U$, a motive defined by Guibert--Loeser--Merle \cite{GuiLoeMer06a} using a motivic integration procedure as above but imposing no condition on the origin of arcs, by the motivic nearby fiber $S_{f-a}$ of Denef--Loeser \cite{DenLoe98b}, which is constructed from arcs with origin in $U$.
We show in Theorem~\ref{thm:lambda} that the resulting equality generalizes the formula \ref{eq:lambda}, by taking the Euler characteristics and using realization results for $S_{\hat{f}-a,U}$ and $S_{f-a}$ from \cite{GuiLoeMer06a} and \cite{DenLoe98b}.
The two theorems we stated above follow then from simple computations.

The second contribution of this paper is the construction of a non-archimedean analytic version of $S_{f,a}^{\infty}$.
Namely, the \emph{analytic nearby fiber $\mathcal F_{f,a}^\infty$ of $f$ for the value $a$} is the analytic space over the field $K=k((t))$ (endowed with a given $t$-absolute value) defined as
\[
\F_{f,a}^{\infty} = (U_K)^{\an} \setminus \Ubeth\;,
\]
where $\Ubeth$ is the space of analytic nearby cycles of \cite{NicSeb07a}, that is the analytic space associated to the formal completion of $U$ along the fiber $f^{-1}(a)$, and $(U_K)^\an$ is the analytification of the base change of $U$ to $K$ via the map $f-a$.
This definition is analogous to the one of the motive $S_{f,a}^{\infty}$, since if $(X,\hat{f})$ is a compactification of $(U,f)$ then $\F_{f,a}^{\infty}$ consists of those points in the analytification $(X_K)^\an$ of $X_K$ that have origin on $X_K\setminus U_K$ but are not completely contained in $X_K\setminus U_K$.
One advantage of $\F_{f,a}^\infty$ over $S_{f,a}^{\infty}$ is that, while the latter is defined in terms of an extension of $f$ to a compactification of $U$, and only later proven to be independent of such a choice, the former is defined exclusively in terms of $f$.

We then construct a motivic specialization of $\mathcal F_{f,a}^\infty$, its \emph{Serre invariant} $\overline{S}\big(\mathcal F_{f,a}^{\infty}\big)$, and use it to define a third bifurcation set, the \emph{Serre bifurcation set} $B_f^{\mathrm{ser}}$ of $f$, as the union of the discriminant of $f$ with the set of values $a$ such that $\overline{S}\big(\mathcal F_{f,a}^{\infty}\big)$ does not vanish.

Using the decomposition of $\F_{f,a}^\infty$ as a difference, and realization results for the volume and Serre invariant of analytic spaces from \cite{NicSeb07a,Bultot-Phd,Hartmann}, in the subsections \ref{subsection_analyticinfinity} and \ref{subsection_serrebifurcation} we prove the following results:

\begin{thms} 
Let $U$ be a smooth connected $k$-variety and let $f\colon U\to \A^1_k$ be a dominant morphism.
For any value $a$ in $\A^1_k(k)$, we have:
\begin{enumerate}
	\item 
\(
\Vol\big(\mathcal F_{f,a}^{\infty}\big) = \mathbb L^{-(\dim U-1)} S_{f,a}^{\infty}
\)
in $\mathcal M_{k}^{\hat{\mu}}$;
\item
\(
\overline{S}\big(\mathcal F_{f,a}^{\infty}\big) = S_{f,a}^{\infty} \mod (\mathbb L-1)
\)
in $\mathcal M_{k}^{\hat{\mu}}/(\mathbb L-1)$;
\item We have the inclusions
$$\big\{ a \in \A^1_\C(\C)  \;\big|\; 
\chi_c\big(f^{-1}(a)\big) \neq \chi_c\big(f^{-1}(a_\mathrm{gen})\big)
\big\}
\subset B_f^{\mathrm{ser}} \subset B_f^\mathrm{mot}\;.$$
\end{enumerate}\end{thms}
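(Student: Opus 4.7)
The plan is to prove part (1) first by combining two motivic realization theorems for analytic spaces, and then to derive parts (2) and (3) as formal consequences.

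For part (1), the starting point is the set-theoretic decomposition $(U_K)^{\an} = \Ubeth \sqcup \F_{f,a}^{\infty}$ implicit in the very definition of $\F_{f,a}^\infty$; it mirrors on the analytic side the motivic expression $S_{f,a}^{\infty} = S_{\hat f - a, U} - S_{f-a}$ recalled in the introduction. Since motivic volume is additive on such locally closed decompositions, part (1) reduces to the two separate identities
\begin{equation*}
\Vol(\Ubeth) = \mathbb L^{-(\dim U - 1)} \, S_{f-a} \qquad \text{and} \qquad \Vol\big((U_K)^{\an}\big) = \mathbb L^{-(\dim U - 1)} \, S_{\hat f - a, U}\,.
\end{equation*}
The first is the Nicaise--Sebag comparison theorem from \cite{NicSeb07a} applied to $U$ equipped with $f - a$. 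The second is where Bultot's and Hartmann's extensions \cite{Bultot-Phd,Hartmann} of Nicaise--Sebag to (log-smooth but) possibly non-proper models enter: choosing a compactification $(X, \hat f)$ together with an embedded log resolution of $\hat f^{-1}(a) \cup (X \setminus U)$, one computes the left-hand side stratum by stratum and verifies that the strata meeting $X \setminus U$ contribute precisely the motivic weights prescribed by the mild-tangency contact conditions of \cite{GuiLoeMer06a} that define $S_{\hat f - a, U}$. Subtracting the two identities yields part (1).

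Part (2) is then a direct consequence of (1): the twist $\mathbb L^{-(\dim U - 1)}$ becomes trivial in $\mathcal M_k^{\hat\mu}/(\mathbb L - 1)$, and by definition $\overline S\big(\F_{f,a}^\infty\big)$ is the image of $\Vol\big(\F_{f,a}^\infty\big)$ in this quotient.

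For part (3) I use (2) together with the observation that the compactly supported Euler characteristic $\chi_c \colon \mathcal M_{\C}^{\hat\mu} \to \Z$ is a ring morphism sending $\mathbb L$ to $1$, and so factors through $\mathcal M_{\C}^{\hat\mu}/(\mathbb L - 1)$. The inclusion $B_f^{\mathrm{ser}} \subset B_f^{\mathrm{mot}}$ is proved by contraposition: if $a \notin B_f^{\mathrm{mot}}$ then $a$ is not critical and $S_{f,a}^\infty = 0$, so by (2) $\overline S\big(\F_{f,a}^\infty\big) = 0$, hence $a \notin B_f^{\mathrm{ser}}$. For the left inclusion, let $a$ satisfy $\chi_c(f^{-1}(a)) \neq \chi_c(f^{-1}(a_\mathrm{gen}))$. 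If $a \in \mathrm{disc}(f)$ there is nothing to prove; otherwise, the second theorem of the introduction --- whose proof, based on Theorem~\ref{thm:lambda}, computes $\chi_c(S_{f,a}^\infty)$ as (up to sign) the difference of the two Euler characteristics in this case --- gives $\chi_c(S_{f,a}^\infty) \neq 0$. The factorization of $\chi_c$ through the quotient mod $(\mathbb L - 1)$ then forces $\overline S\big(\F_{f,a}^\infty\big) \neq 0$ via (2), and so $a \in B_f^{\mathrm{ser}}$.

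The hard part is, unsurprisingly, the second realization identity in Step~1. Nicaise--Sebag's original theorem applies to proper $R$-models, whereas the natural model of $U_K$ here is the open subscheme $U$ of a compactification, which is not proper. The crux is to import (or adapt) the Bultot--Hartmann realization for non-proper log-smooth models and to verify that the stratification of $(U_K)^{\an}$ arising from a log resolution matches exactly, term by term in the relevant generating series, the arc-theoretic construction of $S_{\hat f - a, U}$ from \cite{GuiLoeMer06a}, whose defining measure imposes mild-tangency conditions at the boundary $X \setminus U$.
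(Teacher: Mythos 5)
Your proof is correct and follows essentially the same route as the paper: part (1) reduces to the two realization identities $\Vol\big(\Ubeth\big) = \mathbb L^{-(\dim U-1)}f_!S_f$ (Nicaise--Sebag, Hartmann) and $\Vol\big((U_K)^{\an}\big) = \mathbb L^{-(\dim U-1)}\hat f_!S_{\hat f,U}$ (Bultot), which the paper cites rather than re-derives; parts (2) and (3) then follow by reduction mod $(\mathbb L-1)$ and the Euler-characteristic factorization exactly as you describe. The only cosmetic difference is that the paper \emph{defines} $\Vol\big(\F_{f,a}^\infty\big)$ as the difference $\Vol\big((U_K)^{\an}\big)-\Vol\big(\Ubeth\big)$ rather than invoking additivity of motivic volume as a theorem, sidestepping the point you flag; otherwise the two arguments coincide.
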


We expect that a deep study of the geometry (and étale cohomology) of the analytic nearby fibers at infinity will lead to a better understanding of various phenomena of equisingularity at infinity, and we plan to study this topic further in an upcoming project.

\subsection*{Acknowledgments}
We are very thankful to David Bourqui, Raf Cluckers, Johannes Nicaise, and Julien Sebag, who organized the conference ``Nash : Schémas des arcs et singularités'' in Rennes in 2016, gave the first of us the opportunity to give a talk there, and proposed us to contribute to the conference proceedings.   
We are also grateful to Emmanuel Bultot, Pierrette Cassou-Noguès, Alexandru Dimca, Johannes Nicaise, and Claude Sabbah for inspiring discussions, and the anonymous referee for his comments and corrections.
This work is partially supported by ANR-15-CE40-0008 (Défigéo).

\subsection*{Notations}

In the paper we will freely use the following notations.

\begin{itemize} 
	\item $k$ is a field of characteristic zero that contains all the roots of unity.
	\item A $k$-variety is a separated $k$-scheme of finite type.
	\item $\hat{\mu}$ is the projective limit $\underset{\leftarrow}{\lim}\mu_n$, where for any positive integer $n$ we denote by $\mu_n$ the group scheme of $n$-th roots of unity.
	\item $R$ is the discrete valuation ring $k[[t]]$, endowed with the t-adic valuation. 
	\item $K=k((t))$ is the fraction field of $R$, endowed with a fixed $t$-adic absolute value. 
	\item For any integer {$n$}, we denote by $K(n)=K((t^{1/n}))$ the unique extension of degree $n$ of $K$ obtained by joining a $n$-th root of $t$ to $K$, and by $R(n)$ the normalization of $R$ in $K(n)$.
	\item Given a $k$-variety $X$ and a morphism $f\colon X\to \A^1_k=\Spec\,k[t]$, we set $X_R=X\times_{\A^1_k}\Spec\,R$ and $X_K=X\times_{\A^1_k}\Spec\,K$.
\end{itemize}


\section{Motivic integration and nearby cycles} \label{section:rappels}
\label{section_motivicint}

In this section we review some basic constructions in motivic integration that will be used throughout the paper.  
We refer to \cite{DenLoe99a}, \cite{DenLoe01b}, \cite{Loe09a}, \cite{Loo02a}, \cite{GuiLoeMer06a}, and \cite{GuiLoeMer05a} for a more thorough discussion of these notions.

\subsection{Grothendieck rings} \label{inverse-direct}
We say that an action of $\mu_n$ on a $k$-variety $X$ is \emph{good} if every ${\mu_n}$-orbit is contained in some open affine subscheme of $X$, and that an action of $\hat{\mu}$ on $X$ is \emph{good} if it factors through a good $\mu_n$-action for some $n$.
If $S$ is a $k$-variety, we denote by $\Var^{\hat{\mu}}_S$ the category of $S$-varieties with a good $\hat{\mu}$-action, that is the category whose objects are the $k$-varieties $X$ endowed with a good action of $\hat{\mu}$ 
and with a morphism $X\to S$ that is equivariant with respect to the trivial $\hat{\mu}$-action on $S$, and whose morphisms are $\hat{\mu}$-equivariant morphisms over $S$.

We denote by $K_0(\Var_{S}^{\hat{\mu}})$ the Grothendieck ring of $\Var_{S}^{\hat{\mu}}$.
It is defined as the abelian group generated by the isomorphism classes $[X,\sigma]$ of the elements $(X,\sigma)$ of $\Var_{S}^{\hat{\mu}}$, with the relations $[X,\sigma] = [Y,\sigma|_Y] + [X \setminus Y,\sigma|_{X \setminus Y}]$ if $Y$ is
a closed subvariety of $X$ stable under the action of $\hat{\mu}$, and moreover $[X \times \mathbb A^n_{k}, \sigma] = [X \times \mathbb A^{n}_{k} , \sigma']$ if $\sigma$ and $\sigma'$ are two liftings of the same $\hat{\mu}$-action on $X$ to an affine action on $X \times \mathbb A^k_n$. 
There is a natural ring structure on  $K_0(\Var_{S}^{\hat{\mu}})$, the product being induced by the fiber product over $S$.
In the rest of the paper we will simply write $[X]$ for $[X,\sigma_X]$, as no risk of confusion will arise.

We denote by $\mathcal M_S^{\hat{\mu}}$ the localization of $K_0(\Var_{S}^{\hat{\mu}})$ at the element $\mathbb L_S = [\mathbb A^1_S]$, that is the class of the affine line over $S$ endowed with the trivial action. 
If $S=\Spec \, k$ we simply write $\mathcal M_k^{\hat{\mu}}$ and $\mathbb L$ for $\mathcal M_S^{\hat{\mu}}$ and $\mathbb L_S$ respectively, and if we only consider varieties with trivial $\hat{\mu}$-action, we obtain analogous Grothendieck rings $\mathcal M_k$ and $\mathcal M_S$.

If $f:S'\ra S$ is a morphism of $k$-varieties, then the composition with $f$ on the left induces a \emph{direct image} group morphism
\[
f_{!}\colon\mathcal M_{S'}^{\hat{\mu}} \ra \mathcal M_{S}^{\hat{\mu}}\,,
\]
while taking the fiber product with $S'$ over $S$ induces an \emph{inverse image} ring morphism
\[
f^{*}\colon\mathcal M_{S}^{\hat{\mu}} \ra \mathcal M_{S'}^{\hat{\mu}}\,.
\]

We denote by $\mathcal M_{S}^{\hat{\mu}}[[T]]_\mathrm{rat}$ the $\mathcal M_{S}^{\hat{\mu}}$-submodule of $\mathcal M_{S}^{\hat{\mu}}[[T]]$ generated by 1 and by the finite products of terms $p_{e,i}(T)=\mathbb L^{e}T^{i}/(1-\mathbb L^{e}T^{i})$, with $e$ in
$\Z$ and $i$ in $\Z_{>0}$. 
The formal series in $\mathcal M_{S}^{\hat{\mu}}[[T]]_\mathrm{rat}$ are said to be \emph{rational}.
There exists a unique $\mathcal M_{S}^{\hat{\mu}}$-linear morphism
$\lim_{T \ra \infty} \colon \mathcal M_{S}^{\hat{\mu}}[[T]]_\mathrm{rat} \to \mathcal M_{S}^{\hat{\mu}}$ such that for any finite subset $(e_i,j_i)_{I\in i}$ of $\mathbb Z \times \Z_{>0}$ we have $\lim_{T \ra \infty} \big( \prod_I p_{e_i,j_i}(T)\big)=(-1)^{\abs{I}}$.

\subsection{Arcs on varieties} 

Let $X$ be a $\k$-variety of dimension $d$. 
We denote by $\mathcal L_{n}(X)$ the \emph{space of $n$-jets} of $X$, that is the $\k$-scheme of finite type whose functor of points is the following: for every $k$-algebra $L$, the $L$-points of $\mathcal L_n(X)$ are the morphisms $\Spec \big(L[s]/(s^{n+1})\big) \to X$. 
In particular, $\mathcal L_0(X)$ is canonically isomorphic to $X$ itself.
Right compositions with the canonical projections $L[s]/(s^{n+1})\to L[s]/(s^{n})$ yield morphisms $\mathcal L_{n}(X)\ra \mathcal L_{n-1}(X)$, making $\{\mathcal L_n(X)\}_n$ into a projective system. 
These morphisms are $\mathbb A_{\k}^{d}$-bundles when $X$ is smooth of pure dimension $d$.
The \emph{arc space} of $X$, denoted by  $\mathcal L(X)$, is the projective limit of the system $\{\mathcal L_n(X)\}_n$; we denote by $\pi_{n} \colon \mathcal L(X)\ra \mathcal L_{n}(X)$ the canonical projections. 
The arc space of $X$ is a $\k$-scheme that is generally not of finite type.
For every finite extension $k'$ of $k$ the $k'$-rational points of $\mathcal L(X)$ parametrize the morphisms $\Spec\,k'[[s]] \to X$. 
If $\varphi\in\mathcal L(X)$ is an arc on $X$, the point $\pi_0(\varphi)\in X$ is called the \emph{origin} of $\varphi$ on $X$.
Observe that  ${\mu_n}$ acts canonically on $\mathcal L_{n}(X)$ and on $\mathcal L(X)$ by setting $\lambda.\varphi(s)=\varphi(\lambda s)$.

Assume that we have a morphism $f\colon X \to \A^1_k = \Spec \, k[t]$.
With a $L$-arc $\varphi \colon {\Spec\:} L[[s]] \to X$ on $X$ we can then associate its \emph{order} $\ord f(\varphi):= \ord_s \big(\varphi^\#(f)\big) \in \Z_{\geq0}\cup\{+\infty\}$, and its \emph{angular component} $\ac \, f(\varphi)\in L$, defined as the leading coefficient of the power series $\varphi^\#(f)\in L[[s]]$; by convention $\ac \, f(\varphi) = 0$ if $\varphi^\#(f)=0$.

More generally, if $F$ is a closed subvariety of $X$ of coherent ideal sheaf $\mathcal I_{F}$, then the \emph{contact order} of an arc $\varphi\in\mathcal L(X)$ along $F$ is $\ord_F(\varphi):= \inf_g \ord \; \varphi^\#(g)$, where $g$ runs among the local sections of $\mathcal I_{F}$ at the origin of $\varphi$.
It is greater than zero if and only if the origin of $\varphi$ lies in $F$, while it takes the value infinity if $\varphi$ is an arc on $F$.
For example, if we have a morphism $f\colon X \to \A^1_k$ as above then the contact order of $\varphi$ along the fiber $f^{-1}(0)$ is precisely $\ord f(\varphi)$.


\subsection{Motivic nearby cycles} \label{section:Sf} 
Let $X$ be a purely dimensional $k$-variety, let $f \colon X\ra \mathbb A^{1}_{\k}$ be a morphism, and denote by $X_{0}(f)$ the zero locus of $f$ in $X$.
By work of Denef--Loeser \cite{DenLoe98b,DenLoe02a}, Bittner \cite{Bit05a}, and Guibert--Loeser--Merle \cite{GuiLoeMer06a}, there exists a group morphism
\[
\mathcal S_{f}\colon\mathcal M_{X} \longrightarrow \mathcal M_{X_{0}(f)}^{\hat{\mu}}
\]
called the \emph{motivic nearby cycles morphism} of $f$.
In particular, for every open immersion $i\colon U\to X$ we obtain a motive $S_{f,U}:=S_f(i)$ in $\mathcal M_{X_{0}(f)}^{\hat{\mu}}$, the \emph{motivic nearby cycles of $f$ supported on $U$}. 

We will recall the construction of the motive $S_{f,U}$ as done in \cite{GuiLoeMer06a}, restricting to the case where $U$ is a smooth dense open subvariety of $X$, since this case will play an important role in the rest of the paper.
We denote by $F=X\setminus U$ the complement of $U$ in $X$, and by passing to a resolution of the singularities of the pair $(X,F)$ we can assume without loss of generality that $X$ is itself smooth. 
For any two positive integers $n$ and $\delta$, we consider the subscheme
\[
X_{n}^{\delta}(f,U):=\big\{ 
\varphi \in \mathcal L(X) \;\big|\; \ord\, f(\varphi)=n,\; \ac \, f(\varphi)=1,\:
\ord_F (\varphi) \leq  n \delta 
\big\}
\]
of $\mathcal L(X)$.
Since $\ord\, f(\varphi)>0$, we have that $\pi_{0}\big(X_{n}^{\delta}(f,U)\big)\subset X_{0}(f)$, and so for any $m\geq n$ the image $\pi_m\big(X_{n}^{\delta}(f,U)\big)\subset \mathcal L_m(X)$ is endowed with an action of $\mu_n$ given by $\lambda.\varphi(t)=\varphi(\lambda t)$, giving rise to  a class in $\mathcal M_{X_0(f)}^{\hat{\mu}}$; we denote this motive by $\big[X_{n,m}^{\delta}(f,U)\big]$.
Since $X$ is smooth we have an equality 
\[
\big[X_{n,m}^{\delta}(f,U)\big] \mathbb L^{-md} 
=
\big[X_{n,n}^{\delta}(f,U)\big] \mathbb L^{-nd} 
\in 
\mathcal M_{X_0(f)}^{\hat{\mu}}\,.
\] 
This element of $\mathcal M_{X_0(f)}^{\hat{\mu}}$ is called the \emph{motivic measure} of $X_{n}^{\delta}(f,U)$ and denoted by $\mes\big(X_{n}^{\delta}(f,U)\big)$.


For any $\delta$, consider the generating series
\[
Z_{f,U}^{\delta}(T)
:=
\sum_{n \geq 1} \mes\big( X_{n}^{\delta}(f,U)\big)T^{n} 
\in
\mathcal M_{X_{0}(f)}^{\hat{\mu}}[[T]].
\]
By giving an explicit formula for this power series in terms of an embedded resolution of $f$ and of $F$, Guibert--Loeser--Merle show that it is rational,  (see \cite[Proposition 3.8]{GuiLoeMer06a}).
One can then consider the limit $- \lim_{T\ra \infty} Z_{f,U}^{\delta}(T)$ as an element of $\mathcal M_{X_{0}(f)}^{\hat{\mu}}$.
Moreover, they prove that the limit does not depend on the choice of $\delta$, provided that it is big enough.
This limit is the motive $\mathcal S_{f,U}$ that we wanted to define.

\begin{remarks}\phantomsection\label{remarks_SfU}
	\begin{enumerate}
		\item Assume that $X$ is smooth and take $U=X$. 
		Then in the construction above the condition on the tangency of arcs to $F$ disappears, and there is therefore no need to show that the generating series does not depend on $\delta$.
		The resulting motive $S_f:=S_{f,X}$ is the \emph{motivic nearby cycles} defined by Denef--Loeser, and the procedure we sketched is the original construction from \cite{DenLoe98b,DenLoe02a}. 
		It follows from the formula expressing $S_f$ in terms of a log-resolution of $\big(X,f^{-1}(0)\big)$ that if $0$ is not in the discriminant of $f$ then $S_f=[f^{-1}(0)]$.
	\item \label{rem:additivity} It follows from the construction of Guibert--Loeser--Merle that if $X$ and $F$ are smooth then
		\[
		S_{f}\big([i_F \colon F\ra X]\big) = i_{F!}S_{f\circ i_F,F} = i_{F!}S_{f_{\mid F}}.
		\]
		In particular, by additivity of the morphism $S_f$ we obtain an equality 
		\[
		S_{f,U} = S_f - i_{F!}S_{f_{\mid F}}.
		\]
	\item Assume that $k$ is the field of complex numbers. 
	Bittner \cite[\S 8]{Bit05a} and Guibert--Loeser--Merle \cite[Proposition 3.17]{GuiLoeMer06a}, extending a result of Denef--Loeser \cite[Theorem 4.2.1]{DenLoe98b}, show that the motivic nearby cycles morphism is indeed a motivic version of the nearby cycles functor, which justifies the terminology.
	More precisely, they show that the following diagram is commutative:
	\[
		\xymatrix{ \mathcal M_X \ar^{S_f}[rr] \ar_{\chi_X}[d] && \mathcal M^{\hat{\mu}}_{X_{0}(f)} \ar^-{\chi_{X_0(f)}}[d] \\
		K_0\big(D_{c}^{b}(X)\big) \ar^-{\psi_f}[rr] && **[c] K_0\big(D_{c}^{b}(X_{0}(f))^{\mathrm{mon}}\big) \;, }
	\]
	where $K_0\big(D_{c}^{b}(X)\big)$ and $K_0\big(D_{c}^{b}(X_{0}(f))^{\mathrm{mon}}\big)$ are the Grothendieck rings of the derived categories of bounded constructible complex sheaves on $X$ and $X_{0}(f)$ respectively (the latter being endowed with a quasi-unipotent action), $\psi_f$ is the nearby cycle functor at the level of sheaves (see \cite{SGA7}), and $\chi_X$ and $\chi_{X_0(f)}$ are the realization morphisms which assign to a variety $p \colon Y\ra X$ or $\big(p \colon Y\ra X_0(f),\sigma \big)$ the corresponding class $[Rp_!\underline{\mathbb{Q}}_Y]$ and $[Rp_!\underline{\mathbb Q}_Y,\sigma]$ respectively.
	Moreover, the direct image functors at the level of motives and at the level of sheaves are compatible. 
	For instance, if $i\colon U\to X$ is an open immersion and $f\colon X\to \mathbb A^1_\mathbb C$ is a morphism, then we have
	\[
	\chi_{X_0(f)}\big(S_{f,U}\big) = \Big[\psi_f\big(Ri_{!}\underline{\mathbb Q}_U\big)\Big]\;\;
	\]
	and 
	\[
	\chi_{\mathbb C}\big({f}_{!}S_{f,U}\big) = \Big[R_{f!}\psi_f\big(Ri_{!}\underline{\mathbb Q}_U\big)\Big]\,.
	\]
	\end{enumerate}
\end{remarks}


\section{Motivic nearby cycles at infinity and the motivic bifurcation set}
\label{section_motivicinfinity}

In this section we study the motivic nearby cycles at infinity $S_{f,a}^{\infty}$ for a fiber $f^{-1}(a)$ of a morphism $f\colon U\to \A^1_k$, a motive introduced by the second author in \cite{Rai11}.
We prove that this object is a motivic generalization of the invariants $\lambda_a(f)$ mentioned in the introduction.
We then deduce that under a natural assumption, that is when $f$ has isolated singularities at infinity, the motivic bifurcation set of $f$, which is defined in \emph{loc. cit.} as the set of those values $a$ such that $S_{f,a}^{\infty}$ doesn't vanish, contains the topological bifurcation set of $f$.

\subsection{Motivic nearby cycles at infinity} \label{subsection:motivic-invariant-infinity}
We begin by recalling some constructions of the second author from \cite[\S 4]{Rai11}.
Let $U$ be a smooth connected algebraic $k$-variety and let $f\colon U\ra \mathbb A^1_k$ be a dominant morphism.

By Nagata compactification theorem there exists a \emph{compactification} $(X,i,\hat{f})$ of $f$, by which we mean the data of a $k$-variety $X$, an open dominant immersion $i\colon U\ra X$, and a proper map $\hat{f}\colon X \ra \mathbb A^{1}_{k}$ such that the following diagram is commutative:
\[
\xymatrix{ U \ar^{i}[r] \ar_{f}[dr] & X \ar^{\hat{f}}[d] \\
           & \mathbb A^{1}_{k}
}
\]

In the following, we will identify $U$ with its image $i(U)$ in $X$, drop the immersion $i$ from the notation $(X,i,\hat{f})$, and denote by $F$ be the closed subvariety $X\setminus U$ of $X$.

\begin{defn}[Motivic nearby cycles at infinity] For any $a$ in $\mathbb A_{\k}^{1}(k)$, we call \emph{motivic nearby cycles at infinity of $f$ for the value $a$} the motive
	\[
	S_{f,a}^{\infty} = \hat{f_!}\big(S_{\hat{f}-a,U} - i_!S_{f-a}\big) \in \mathcal M_{k}^{\hat{\mu}}.
	\]
\end{defn}

It is shown in \cite[Theorem 4.2]{Rai11} that the motive $S_{f,a}^{\infty}$ does not depend on the chosen compactification of $f$.
While the proof in \emph{loc. cit.} relies on a computation on a suitable resolution of the singularities of the pair $\big(X,\hat{f}^{-1}(a) \cup F\big)$, this fact will also naturally follow from the results discussed in Section~\ref{section_nonarchimedean}.

\begin{rem}
The motive $S_{\hat{f}-a,U} - i_!S_{f-a}\in \mathcal M_{\hat{f}^{-1}(a)}^{\hat{\mu}}$ can be obtained using motivic integration in the following way. 
Given integers $\delta>0$ and $n\geq 1$, consider the set of arcs
\[
X_{n}^{\delta,\infty}\big(\hat{f}-a\big) = 
		\left\{ \begin{array}{c|c} 
			        \varphi \in \mathcal L(X) \; & 
				\begin{array}{l} \varphi(0) \in F, \: 
				  \ord_F (\varphi) \leq n \delta \\ 
				  \ord \big(\hat{f}-a\big)(\varphi) = n,\: \ac \big(\hat{f}-a\big)(\varphi) = 1
			        \end{array} 
		         \end{array} 
		\right\}, 
\] 
and the zeta function 
\[
Z_{\hat{f}-a,U}^{\delta,\infty}(T) = 
\sum_{n\geq 1} \mes\big(X_{n}^{\delta,\infty}(\hat{f}-a)\big) T^n \in 
\mathcal M_{\hat{f}^{-1}(a)}^{\hat{\mu}}[[T]]\;.
\]
Then, for any $\delta$ large enough, we have
\[
S_{\hat{f}-a,U} - i_!S_{f-a} = 
-\lim_{T\ra \infty} Z_{\hat{f}-a,U}^{\delta,\infty}(T)  \in \mathcal M_{\hat{f}^{-1}(a)}^{\hat{\mu}}\;.
\]
\end{rem}

The motive $S_{f,a}^{\infty}$ can be expected to be a motivic analogue of the invariants $\lambda_a(f)$ discussed in the introduction.
However, no link between the two was established in \cite{Rai11}.
That such a connection exists is a consequence of the next Theorem.

\begin{thm} \phantomsection\label{thm:lambda}
	Let $f$ be a polynomial in $\C[x_1,\dots,x_d]$ and let $a$ be an element of $\A^1_\C(\C)$.
	Then:
	\begin{enumerate} 
		\item \label{thm:lambda_1}
		$
		\chi_{c}\big(S_{f,a}^{\infty}\big) = \chi_c\big(f^{-1}(a_\mathrm{gen})\big) - \chi_c\left(f_!S_{f-a}\right);
		$
		\item \label{thm:lambda_2}
		if $a$ is not a critical value of $f$, then 
			\(
				\chi_{c}\big(S_{f,a}^{\infty}\big) = \chi_c\big(f^{-1}(a_\mathrm{gen})\big) - \chi_c\big(f^{-1}(a)\big)\,;
			\)
			
	        \item \label{thm:lambda_3}
	        if $f$ has isolated singularities in $\A^1_\C$, then
			\(
			\chi_{c}\big(S_{f,a}^{\infty}\big) = (-1)^{d-1}\lambda_a(f)\,.
			\)
\end{enumerate}
\end{thm}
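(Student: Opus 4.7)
The plan is to establish (1) first, and then obtain (2) and (3) by combining it with classical sheaf-theoretic facts about nearby cycles and Milnor fibers.

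For (1), I would expand the definition of $S_{f,a}^\infty$ and use additivity of $\chi_c$ together with $\hat{f} \circ i = f$ to obtain
\[
\chi_c\big(S_{f,a}^\infty\big) = \chi_c\big(\hat{f}_! S_{\hat{f}-a, U}\big) - \chi_c\big(f_! S_{f-a}\big)\,.
\]
It then suffices to identify the first term with $\chi_c\big(f^{-1}(a_\mathrm{gen})\big)$. For this, I would apply the sheaf realization of Remarks~\ref{remarks_SfU}(3), which identifies $\chi_c\big(\hat{f}_! S_{\hat{f}-a, U}\big)$ with the Euler characteristic of the constructible complex $R\hat{f}_!\psi_{\hat{f}-a}(Ri_!\underline{\mathbb{Q}}_U)$ on $\A^1_\C$. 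Since $\hat{f}$ is proper, Deligne's compatibility of nearby cycles with proper direct image yields $R\hat{f}_!\psi_{\hat{f}-a} \cong \psi_{t-a} R\hat{f}_!$; combined with $R\hat{f}_! Ri_!\underline{\mathbb{Q}}_U = Rf_!\underline{\mathbb{Q}}_U$, this reduces the computation to $\chi\big(\psi_{t-a}(Rf_!\underline{\mathbb{Q}}_U)\big)$. By conservation of Euler characteristic under nearby cycles, this equals the Euler characteristic of a generic nearby stalk of $Rf_!\underline{\mathbb{Q}}_U$, namely $\chi_c\big(f^{-1}(b)\big) = \chi_c\big(f^{-1}(a_\mathrm{gen})\big)$ for $b$ generic close to $a$.

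Assertion (2) is then immediate: if $a$ is not a critical value of $f$, Remarks~\ref{remarks_SfU}(1) gives $S_{f-a} = [f^{-1}(a)]$, so $\chi_c(f_! S_{f-a}) = \chi_c(f^{-1}(a))$, and substituting in (1) yields (2). For (3), assume $f$ has isolated singularities in $\A^d_\C$. I would compute $\chi_c(f_! S_{f-a}) = \chi_c(S_{f-a})$ using the same realization: it equals the compactly-supported Euler characteristic of the constructible sheaf $\psi_{f-a}\underline{\mathbb{Q}}_U$ on $f^{-1}(a)$. Decomposing $f^{-1}(a)$ into its smooth locus and its finite set of singular points $\{p_1,\dots,p_N\}$, the stalk of $\psi_{f-a}\underline{\mathbb{Q}}_U$ is $\underline{\mathbb{Q}}$ at smooth points, while at each $p_i$ it is the cohomology of the local Milnor fiber of an isolated hypersurface singularity, a bouquet of $\mu_{p_i}$ spheres of dimension $d-1$, hence of Euler characteristic $1 + (-1)^{d-1}\mu_{p_i}$. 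Additivity of $\chi_c$ then yields
\[
\chi_c\big(S_{f-a}\big) = \chi_c\big(f^{-1}(a)\big) + (-1)^{d-1}\mu_a(f)\,.
\]
Substituting in (1) and using formula~\eqref{eq:lambda} to replace $\chi_c(f^{-1}(a)) - \chi_c(f^{-1}(a_\mathrm{gen}))$ by $(-1)^d\big(\mu_a(f) + \lambda_a(f)\big)$, the $\mu_a(f)$ contributions cancel and one recovers $\chi_c(S_{f,a}^\infty) = (-1)^{d-1}\lambda_a(f)$.

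The main obstacle is the first step, which rests on a careful use of the sheaf realization of Remarks~\ref{remarks_SfU}(3) together with Deligne's proper base change for nearby cycles, in order to collapse the computation to a generic stalk of $Rf_!\underline{\mathbb{Q}}_U$. Once (1) is in place as the motivic counterpart of the classical $\chi_c$-generic-fiber identity, (2) is immediate and (3) reduces to the local Milnor-fiber Euler characteristic computation combined with Artal Bartolo--Luengo--Melle-Hern{\'a}ndez's formula \eqref{eq:lambda}.
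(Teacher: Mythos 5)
Your proposal is correct and takes essentially the same route as the paper: additivity of $\chi_c$, the sheaf realization of Remarks~\ref{remarks_SfU}(3) combined with proper base change for nearby cycles to identify $\chi_c\big(\hat f_! S_{\hat f - a, U}\big)$ with $\chi_c\big(f^{-1}(a_\mathrm{gen})\big)$, the identity $S_{f-a}=[f^{-1}(a)]$ for noncritical $a$, and finally the computation $\chi_c(f_!S_{f-a})=\chi_c(f^{-1}(a))+(-1)^{d-1}\mu_a(f)$ combined with \eqref{eq:lambda}. The only cosmetic difference is in part (3): where you compute $\chi_c(\psi_{f-a}\underline{\mathbb{Q}}_U)$ directly from the classical Milnor fiber structure at each isolated singular point, the paper instead decomposes the motive $S_{f-a}$ by origin of arcs into $[f^{-1}(a)]+\sum_x(S_{f,x}-[\{x\}])$ and then invokes the Denef--Loeser realization $\chi_c(S_{f,x}-1)=(-1)^{d-1}\mu(f,x)$; the two arguments are interchangeable.
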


\begin{proof} 
	By applying the Euler characteristic to the definition of $S_{f,a}^{\infty}$ we obtain 
	\[
	\chi_c\big(\hat{f}_{!}S_{\hat{f}-a,U}\big) = \chi_c\left(f_{!}S_{f-a}\right) + 
	\chi_c\left(S_{f,a}^{\infty}\right),
	\]
	therefore we need to establish the following equality:
	\[
	\chi_c\big(\hat{f}_{!}S_{\hat{f}-a,U}\big) = 
	\chi_c\big(f^{-1}(a_\mathrm{gen})\big)\;.
	\]
	By \cite[Proposition 3.17]{GuiLoeMer06a} (or \cite[\S 8]{Bit05a}), as noted in Remark~\ref{remarks_SfU}, for any value $a$ the motive $\hat{f}_{!} S_{\hat{f}-a,U}$ realizes on the class $\big[R_{\hat{f!}}\psi_{\hat{f}-a}(Ri_! \underline{\mathbb Q}_{U})\big]$.
	Using the proper base change for the nearby cycles sheaves via the application $t-a$ in $\mathbb A_{\k}^1$ (see \cite[Proposition 4.2.11]{Dim04a}), we deduce that
	\[
	\big[R_{\hat{f!}}\psi_{\hat{f}-a}(Ri_! \underline{\mathbb Q}_{U})\big] = 
	\big[\psi_{t-a}\big((R\hat{f} \circ Ri)_! \underline{\mathbb Q}_{U}\big)\big]  
		=  \big[\psi_{t-a}(Rf_! \underline{\mathbb Q}_{U})\big]\;.
	\]
	Taking the Euler characteristics $\chi_c$ for the motives and $\chi$ for the sheaves we obtain
	\begin{equation} \label{formule:caracteristique-d-Euler}
	\chi_c\big(\hat{f}_{!}S_{\hat{f}-a,U}\big) = 
	\chi\big([R_{\hat{f!}}\psi_{\hat{f}-a}(Ri_! \underline{\mathbb Q}_{U})]\big) = 
	\chi\big([\psi_{t-a}(Rf_! \underline{\mathbb Q}_{U})]\big) = 
	\chi_c\big(f^{-1}(a_\mathrm{gen})\big)\;,
	\end{equation}
	where the last equality follows from \cite[(2.5.6)]{Kashiwara-Schapira} and \cite[(2.3.26)]{Dim04a}.
	Hence we obtain
	\[
	\chi_{c}(S_{f,a}^{\infty}) = \chi_c\big(f^{-1}(a_\mathrm{gen})\big) -  \chi_c\left(f_{!}S_{f-a}\right),
	\]
	proving part \ref{thm:lambda_1}.
	
	In particular, if $a\in \A^1_\C(\C)$ is not a critical value of $f$, we have $f_{!}S_{f-a} = [f^{-1}(a)]$, and therefore
		\[
		\chi_c\big(f^{-1}(a_\mathrm{gen})\big) - \chi_c\big(f^{-1}(a)\big) = \chi_c\left(S_{f,a}^{\infty}\right),
		\]
	which proves part \ref{thm:lambda_2}.

	Now assume that $f$ has isolated singularities.
	Then the following equalities of motives over $f^{-1}(a)$ hold:
	\begin{align*}
	S_{f-a} & = \big[f^{-1}(a)\setminus \text{crit}(f) \to f^{-1}(a)\big] + \sum_{x \in f^{-1}(a) \cap  \text{crit}(f)} S_{f,x} \\
	& = \big[f^{-1}(a)\to f^{-1}(a)\big] + \sum_{x \in f^{-1}(a) \cap  \text{crit}(f)} \big(S_{f,x}- \big[{x} \to f^{-1}(a)\big]\big)
	\end{align*}
	where, for any critical point $x$, the motive $S_{f,x}$ is the motivic Milnor fiber of $f$ at the point $x$, which is the motive constructed analogously as $S_f$ but only using arcs with origin $x$.
	This can be shown by subdividing the arcs defining $S_{f-a}$ according to whether their origin falls in the smooth part of the fiber or not, see \cite[\S4.4]{Rai11} for more details.
		
	Applying the direct image $f_!$ and taking the Euler characteristic of both sides of the last equation, we obtain 
		\[
		\chi_c(f_! S_{f-a}) = \chi_c\big(f^{-1}(a)\big) + 
		\sum_{x \in f^{-1}(a) \cap  \text{crit}(f)} (-1)^{d-1}\mu(f,x)\;,
		\]
	where $\mu(f,x)$ is the Milnor number of $f$ at $x$, since it follows from results of Denef--Loeser (for instance from \cite[Theorem 4.2.1]{DenLoe98b}) that $\chi_c(S_{f,x}-1) = (-1)^{d-1}\mu(f,x)$.
 
	Finally, by applying to this the formula \ref{eq:lambda} and the previous part of the theorem we obtain the equality 
		\[
		\chi_{c}\big(S_{f,a}^{\infty}\big) = (-1)^{d-1}\lambda_a(f)\;,
		\]
	showing part \ref{thm:lambda_3} and thus concluding the proof of the theorem.
\end{proof}


\subsection{Motivic bifurcation set}

In the light of the previous result, it is natural to define a motivic version of the topological bifurcation set of $f$ as follows.

\begin{defn}[{\cite[Définition 4.6]{Rai11}}]
	The \emph{motivic bifurcation set} of $f$ is the set defined as
	\[
	B_{f}^\mathrm{mot} = \big\{a\in \mathbb A^1_{\k}(k) \;\big|\; S_{f,a}^{\infty} \neq 0\big\} \cup \text{disc}(f)\;,
	\]
where $\text{disc}(f)$ denotes the discriminant of $f$.
\end{defn}

The set $B_{f}^\mathrm{mot}$ is finite, as is proven in \cite[Théorème 4.13]{Rai11} via a computation on a suitable resolution of a compactification $(X,\hat{f})$ of $f$.

If we assume that $\k$ is the field of complex numbers, the first part of Theorem~\ref{thm:lambda} implies the following result about the motivic bifurcation set of $f$.

\begin{cor}\phantomsection\label{thm:inclusion-Btop-Bmot}
	Let $f$ be a polynomial in $\C[x_1,\dots,x_d]$. 
	Then we have
	\begin{equation*}
	\big\{ a \in \A^1_\C(\C)  \;\big|\; 
		\chi_c\big(f^{-1}(a)\big) \neq \chi_c\big(f^{-1}(a_\mathrm{gen})\big)
	\big\}
	\subset
	B_f^\mathrm{mot}\;.
	\end{equation*}
\end{cor}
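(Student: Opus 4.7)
The plan is to prove the corollary directly by a two-case analysis on whether $a$ lies in the discriminant of $f$, leveraging part \ref{thm:lambda_2} of Theorem~\ref{thm:lambda}.

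First, I would fix a value $a \in \A^1_\C(\C)$ satisfying $\chi_c\big(f^{-1}(a)\big) \neq \chi_c\big(f^{-1}(a_\mathrm{gen})\big)$, and show that in either of the two cases below $a$ belongs to $B_f^\mathrm{mot}$. If $a \in \text{disc}(f)$, then $a$ lies in $B_f^\mathrm{mot}$ tautologically from the definition, since the discriminant is one of the two sets whose union forms $B_f^\mathrm{mot}$. Otherwise, $a$ is not a critical value of $f$, and I can invoke Theorem~\ref{thm:lambda}\ref{thm:lambda_2} to obtain
\[
\chi_c\big(S_{f,a}^{\infty}\big) = \chi_c\big(f^{-1}(a_\mathrm{gen})\big) - \chi_c\big(f^{-1}(a)\big)\,.
\]
By hypothesis the right-hand side is nonzero, so $\chi_c\big(S_{f,a}^{\infty}\big) \neq 0$; in particular the motive $S_{f,a}^{\infty}$ itself must be nonzero in $\mathcal M_{\C}^{\hat{\mu}}$, since the Euler characteristic factors through the class in the Grothendieck ring. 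This puts $a$ in the first component of the union defining $B_f^\mathrm{mot}$, concluding the argument.

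There is essentially no obstacle here: the statement is a direct corollary of Theorem~\ref{thm:lambda}\ref{thm:lambda_2}, and the case split on the discriminant is needed only because Theorem~\ref{thm:lambda}\ref{thm:lambda_2} requires $a$ to be a non-critical value (a mild restriction that is handled for free by the definition of $B_f^\mathrm{mot}$, which includes $\text{disc}(f)$ by fiat). The only subtlety worth mentioning explicitly is the implication ``$\chi_c(S_{f,a}^\infty) \neq 0 \Rightarrow S_{f,a}^\infty \neq 0$'', which is immediate from the existence of the Euler characteristic realization morphism on $\mathcal M_{\C}^{\hat{\mu}}$ but deserves a brief sentence for the reader.
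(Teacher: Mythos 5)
Your proof is correct and follows essentially the same argument as the paper: split on whether $a \in \mathrm{disc}(f)$, handle that case by definition, and otherwise apply Theorem~\ref{thm:lambda}\ref{thm:lambda_2} to conclude $\chi_c\big(S_{f,a}^{\infty}\big) \neq 0$, hence $S_{f,a}^{\infty} \neq 0$. Your explicit note about the Euler characteristic factoring through the Grothendieck ring is a small clarification the paper leaves implicit, but the route is the same.
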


\begin{proof} 
If $a$ belongs to the discriminant of $f$ then $a$ belongs to $B_f^\mathrm{mot}$ by definition, therefore we can assume that $a$ is not in $\mathrm{disc}(f)$.
Now, since $\chi_c\big(f^{-1}(a)\big)\neq\chi_c\big(f^{-1}(a_\mathrm{gen})\big)$, it follows from the second part of Theorem~\ref{thm:lambda} that $\chi_{c}(S_{f,a}^{\infty})$ is nonzero, therefore $S_{f,a}^{\infty}$ is nonzero as well, that is $a$ belongs to $B_{f}^\mathrm{mot}$.
\end{proof}

\begin{rem}
	In particular, whenever $B_f^\mathrm{top} = \big\{ a \in \A^1_\C(\C)  \;\big|\; 
	\chi_c\big(f^{-1}(a)\big) \neq \chi_c\big(f^{-1}(a_\mathrm{gen})\big)
	\big\}$, then $B_f^\mathrm{top}$ is included in $B_f^\mathrm{mot}$.
	For example, as observed in the introduction, this holds in the case of plane curves, or more generally whenever $f$ has isolated singularities at infinity, by \cite{Par95a}.
	Observe also that Theorem~\ref{thm:lambda} does not require $f$ to have isolated singularities in $\A^d_\C$, therefore it applies also to situations where the invariants $\lambda_a(f)$ are not defined.
\end{rem}

\begin{rem} 
In this remark we explain how to lift the equality \ref{eq:caracteristiquefibregenerique} to an equality in $\mathcal M^{\hat{\mu}}_k$. 
In order to do this we need to recall the notion of global motivic zeta function introduced in \cite{Rai11}. 
For any $n\geq 1$ and $\delta \geq 1$, we consider
\begin{equation*}
	X_{n}^{\mathrm{global},\delta}\big(\hat{f}\big) = 
	   \left\{ \begin{array}{c|c} 
		       \varphi(t) \in \mathcal L(X) & 
		           \begin{array}{l}
			         \ord_F \big(\varphi(t)\big) \leq n \delta, \, \ord \left(\hat{f}\big(\varphi(t)\big) - \hat{f}\big(\varphi(0)\big)\right) = n\\
			         \ac \left(\hat{f}\big(\varphi(t)\big) - \hat{f}\big(\varphi(0)\big)\right) = 1
		           \end{array} 
	           \end{array}
	   \right\}. 
\end{equation*}
	The \emph{global motivic zeta function}, defined as 
	 \[
	 Z_{\hat{f},U}^{\mathrm{global},\delta}(T) = \sum_{n\geq 1} \mes\big(X_{n,a}^{\mathrm{global},\delta}(\hat{f})\big) T^n\;,
	 \]
	is a rational series for $\delta$ large enough (see \cite{Rai11}[Theorem 4.10]), so that we can set
	\[
	S^{\mathrm{global}}_{\hat{f},U} = -\lim_{T\ra \infty} Z_{\hat{f},U}^{\mathrm{global},\delta}(T) \in \mathcal M_{X}^{\hat{\mu}}\,.
	\]	
	
	Moreover, again as in \cite[Theorem 4.10]{Rai11}, if $f$ has finitely many critical points then by decomposing the sets $X_{n}^{\mathrm{global},\delta}$ according to the origin of the arcs we obtain the following decomposition 
	\[
	S_{\hat{f},U}^{\mathrm{global}} = [U \setminus \mathrm{crit}(f) \ra X] + 
	\sum_{x \in \mathrm{crit}(f)} i_{!}S_{f,x} + \sum_{a \in B_f^\mathrm{mot}} \big(S_{\hat{f}-a,U} - i_!S_{f-a}\big)\;,
	\]
	and applying the direct image $\hat{f}_!$ we obtain the following equality in $\mathcal M^{\hat{\mu}}_{\A^1_k}$\;:
		\[
			\hat{f}_!S_{\hat{f},U}^{\mathrm{global}} = \big[U \setminus \mathrm{crit}(f)\to \mathbb A^1_{k}\big] + 
			\sum_{x \in \mathrm{crit}(f)} f_! S_{f,x} + \sum_{a \in B_f^\mathrm{mot}} \hat{f}_{!}\big(S_{\hat{f}-a,U} - i_!S_{f-a}\big) \;.
		\]
	
	We claim that whenever $k$ is the field of complex numbers this equality generalizes the equation \ref{eq:caracteristiquefibregenerique}.
	Indeed, by pushing forward via the structure morphism $p\colon \A^1_\C \to \Spec \, \C$ of $\A^1_\C$ and taking the Euler characteristic we obtain
	\[
	\chi_c\left(p_! \hat{f}_! S_{\hat{f},U}^{\mathrm{global}}\right) = 
	1 + (-1)^{d-1}\sum_{x \in \mathrm{crit}(f)} \mu(f,x) + 
	\sum_{a\in B_f^\mathrm{mot}} \chi_{c}\left(S_{f,a}^{\infty}\right).
	\]

	Now, observe that $\hat{f}_! S_{\hat{f},U}^{\mathrm{\mathrm{global}}} \in \mathcal M_{\mathbb A^1_{\mathbb C}}^{\hat{\mu}}$ is a motive over the affine line $\mathbb A^1_\C$. 
	Its fiber over a value $a$ is the motive $\hat{f}_{!}S_{\hat{f}-a,U}$, hence all fibers have Euler characteristics equal to the Euler characteristic of $f^{-1}(a_\mathrm{gen})$ by the formula \ref{formule:caracteristique-d-Euler}. 
	Since the Euler characteristic of the base $\mathbb A^1_{\mathbb C}$ is equal to $1$, we obtain the equality
	\[
	\chi_{c}\left(p_!\hat{f}_! S_{\hat{f},U}^{\mathrm{global}}\right) = \chi_{c}\big(f^{-1}(a_\mathrm{gen})\big)\;.
	\] 
	Indeed, as in classical topology, if $g \colon V\ra \mathbb A^1$ is a surjective morphism whose fiber have all the same Euler characteristic $c$ then the Euler characteristic of $V$ is $c$ as well. This can be seen using constructible functions and integration against Euler characteristic (see for instance \cite{Viro} or \cite[Rem 4.1.32]{Dim04a})
	or more generally Cluckers--Loeser motivic integration \cite{CluLoe08a}.
	Thus, we obtain
	\[
	\chi_{c}\big(f^{-1}(a_\mathrm{gen})\big) =
	1 + (-1)^{d-1}\sum_{x \in \mathrm{crit}(f)} {\mu(f,x)} + 
	\sum_{a\in B_f^\mathrm{mot}} \chi_{c}\left(S_{f,a}^{\infty}\right)\,,
	\]
	which, since $\chi_{c}\big(S_{f,a}^{\infty}\big) = (-1)^{d-1}\lambda_a(f)$ by part \ref{thm:lambda_3} of Theorem~\ref{thm:lambda}, yields
	\[
	\chi\big(f^{-1}(a_\mathrm{gen})\big) = 1 + (-1)^{d-1}\big(\mu(f) + \lambda(f)\big)\,,
	\]
	which is precisely the identity \ref{eq:caracteristiquefibregenerique} cited in the introduction.
\end{rem}


\section{Analytic nearby fiber at infinity and the Serre bifurcation set}
\label{section_nonarchimedean}

In this section, after recalling some constructions in non-archimedean geometry and motivic integration, we define a non-archimedean analytic version of the nearby fiber at infinity and study its properties.

\subsection{Non-archimedean analytic spaces}

We will briefly recall some basic notions of non-archimedean analytic geometry.
While we chose to adopt the point of view of Berkovich, for the purpose of this paper one could also work with rigid analytic spaces.
We refer the reader to \cite{Nicaise2008} and to the references therein for a more thorough discussion of these theories.

With any $K$-variety $X$ is associated an analytic space over $K$, its \emph{analytification} $X^\an$.
It is a locally ringed space, whose points, in analogy with the theory of schemes, are the morphisms $\Spec \, K' \to X$, where $K'$ is a valued field extension of $K$, modulo the relation that identifies two morphisms $\Spec \, K' \to X$ and $\Spec \, K'' \to X$ if they both factor through a third morphism $\Spec \, K''' \to X$, where $K'''$ is an intermediate valued field extension of both $K|K'$ and $K|K''$.

On the other hand, if $X$ is a separated scheme of finite type over the valuation ring $R$ of $K$, then we can also attach to it a formal scheme over $R$, its \emph{formal completion} $\widehat X$.
As a locally ringed space, $\widehat X$ is isomorphic to the inverse limit of the schemes $X \otimes_R R/(t^n)$; its underlying topological space is $X_k$ and its sheaf of functions is $\varprojlim \mathcal O_{X \otimes_R R/(t^n)}$.	
The \emph{analytic space associated with} $\widehat X$ (sometimes also called the generic fiber of $\widehat X$), denoted by $\widehat X ^\beth$, is the compact subspace of $(X_K)^{\an}$ consisting of those points $x\colon \Spec \, K' \to X_K$ that extend to an $R$-morphism $\tilde x \colon \Spec \, R' \to X$, where $R'$ is the valuation ring of $K'$.
If such an extension exists then it is unique by the valuative criterion of separatedness, therefore we obtain a morphism $\mathrm{sp}_{\widehat X} \colon \widehat X^\beth \to X_k$, called \emph{specialization}, that is defined by sending a point $x$ as above to the image through $\tilde x$ of the closed point of $\Spec \, R'$.

\begin{rem} \phantomsection\label{remarks_nonarchimedeaninclusions}
	Let $X$ be a separated scheme of finite type over $R$ and let $U$ be an open subscheme of $X$. Then the following results follows directly from the definitions:
	\begin{enumerate} 
		\item $(U_K)^{\an}$ is an open subspace of the $K$-analytic space $(X_K)^{\an}$, and we have $$(X_K)^\an \setminus (U_K)^\an = (X_K \setminus U_K)^\an.$$
		\item If $X$ is proper over $R$, then the inclusion of $\widehat X^\beth$ in $(X_K)^\an$ is an isomorphism by the valuative criterion of properness.
		\item The open immersion $U\to X$ induces an isomorphism $\widehat U^\beth\cong \mathrm{sp}_{\widehat X}^{-1}(U)$.
	\end{enumerate}
\end{rem}

\begin{rem} \phantomsection\label{remarks_pointsandarcs}
	Let $X$ be a $k$-variety, let $f\colon X \to \A^1_k = \Spec \, k[t]$ be a morphism, and set $X_R=X\times_{\A^1_k}\Spec\,R$ and $X_K=X\times_{\A^1_k}\Spec\,K$.
	Then the $k$-arcs on $X$ with origin in $f^{-1}(0)$ correspond to the totally ramified points of $\big(\widehat{X_R}\big)^\beth$.
	Indeed, if $\varphi \colon \Spec\,k[[s]] \to X$ is an arc on $X$ with $\ord_f(\varphi)=n>0$, then $\varphi$ factors through $X_K$ and the composition 
	\[
	\Spec\,k[[s]] \stackrel{\varphi}{\longrightarrow} X_K \longrightarrow \Spec\,K
	\]
	identifies $K'=\mathrm{Frac}(k[[s]])\cong K(n)$ with a totally ramified degree $n$ extension of $K$, inducing a $K'$-point $x$ of $X_K^\an$ whose specialization $\mathrm{sp}_{\widehat{X_R}}(x)$ coincides with $\pi_0(\varphi)$, hence a point of $\big(\widehat{X_R}\big)^\beth$.
	Conversely, with a $K(n)$-point of $\big(\widehat{X_R}\big)^\beth$ is associated a morphism $\Spec \,R(d)\to X_K$, whose composition with the projection $X_K\to X$ is a $k$-arc on $X$ such that $\ord_f(\varphi)=n$, as $R(n)\cong k[[t^{1/n}]]$.
	For a more precise statement we refer the reader to \cite[6.1.2]{Nicaise2008} or \cite[9.1.2]{NicSeb07a}.
\end{rem}

More generally, if $\X$ is a (separated and quasi-compact) formal $R$-scheme of finite type, that is a quasi-compact locally ringed space that is locally of the form $\widehat X$ for some separated $R$-scheme of finite type $X$, then by gluing the associated $K$-analytic spaces we obtain a compact $K$-analytic space $\X^\beth$.
We say that $\X$ is a \emph{formal model} of $\X^\beth$, and we say that $\X$ is generically smooth if $\X^\beth$ is a smooth $K$-analytic space.
It follows from a celebrated theorem of Raynaud (see \cite[\S4.10]{Nicaise2008}) that if $X$ is a $K$-variety then every compact subspace of $X^\an$ admits a formal model; in particular, this is true for a distinguished class of compact subspaces of $X^\an$, its affinoid domains.

For the purpose of motivic integration it is sufficient to have a formal model for the unramified part of a given analytic space;
this is formalized by the theory of weak Néron models.
If $X$ is a (separated) $K$-analytic space, a \emph{weak Néron model} of $X$ is a formal scheme of finite type $\X$ over $R$ together with an open immersion $i \colon \X^\beth \to X$ such that for every unramified extension $R'$ of $R$ the map $i$ induces a bijection $\X(R') \to X(K')$, where $K'$ is the fraction field of $R'$.

\subsection{Motivic integration on formal schemes and analytic spaces}
\label{subsection_motivicformal}

Extending on work of Sebag \cite{Seb04a}, Loeser--Sebag \cite{LoeSeb03b} and Nicaise--Sebag \cite{NicSeb07b, NicSeb07a, NicSeb11}, Hartmann \cite{Hartmann} defined a theory of equivariant motivic integration on formal schemes of finite type.
We will briefly summarize the results we need.

Let $n$ be an integer, let $\X$ be a generically smooth and flat formal $R$-scheme of finite type endowed with a good $\mu_n$-action on $\X$ (meaning that every $\mu_n$-orbit is contained in an affine formal subscheme of $\X$), and let $\omega$ be a $\mu_n$-closed \emph{volume form} on the compact $K$-analytic space $\X^\beth$ (that is a nowhere vanishing differential form of degree $\dim \X^\beth$ on $\X^\beth$ satisfying an additional compatibility condition in relation with the $\mu_n$-action, see \cite[Definition 6.2, p.32]{Hartmann}).
Note that this setting includes the case where there is no action, by taking $n=1$; in which case the constructions below reduce to those of \cite{Seb04a, LoeSeb03b, NicSeb07a}.
Then there exists a $\mu_n$-equivariant Néron smoothening $h\colon \mathcal Y \to \X$, that is an equivariant morphism of formal $R$-schemes such that $\mathcal Y$ is a weak Néron model of $\X^{\beth}$ and $h$ factors via an open immersion through an equivariant morphism $\mathcal Y'\to \X$ that induces an isomorphism $\mathcal Y'^{\beth}\to \X^{\beth}$.
It can be shown that the motive
\[
\int_{\mathcal X} \abs{\omega} := 
\sum_{\substack{C \text{ connected}\\\text{component of }\mathcal Y_k}} [C]\mathbb L^{-\ord_{C}\big((h^\beth)^*(\omega)\big)} \in \mathcal M_{\mathcal X_k}^{\mu_n}
\]
only depends on $\mathcal X$ and $\omega$ and not on the Néron smoothening $h$.

Moreover, if $X$ is a smooth $K$-analytic space that admits a weak Néron model $\mathcal U$ (such as for example the space $\X^\beth$ above, with $\mathcal U=\mathcal Y$) and $X$ is endowed with a ${\mu}_n$-action that extends to a good ${\mu}_n$-action on $\mathcal U$, then the image of $\int_{\mathcal U}\abs{\omega}$ under the forgetful morphism $\mathcal M_{\mathcal U_k}^{\mu_n} \to \mathcal M_{k}^{\mu_n}$ only depends on $X$ and $\omega$ and not on $\mathcal U$, and is denoted by
\[
\int_{X} \abs{\omega} \in \mathcal M_{k}^{\mu_n}.
\]

Now let $\mathcal X$ be a generically smooth and flat formal $R$-scheme of finite type, and assume that $\mathcal X^{\beth}$ admits a gauge form $\omega$.
Since weak Néron models only see the unramified points of $\X^\beth$ (which in the setting of Remark~\ref{remarks_pointsandarcs} correspond only to arcs with contact order $1$ along $\X_k$), to see its ramified points
we consider the \emph{volume Poincaré series} of $(\mathcal X, \omega)$, that is defined as 
\[
S(\mathcal X, \omega, T)= \sum_{n\geq 1} \int_{\mathcal X(n)} \abs{\omega(n)} T^n \in \mathcal M^{\hat{\mu}}_{\mathcal X_k}[[T]]\,,
\]
where for any integer $n$ we denote by $\mathcal X(n)=\X \otimes_R R(n)$ the base change of $\mathcal X$ to $R(n) \cong k[[t^{1/n}]]$, endowed with the natural action of $\text{Gal}(K(n)|K)\simeq \mu_n$. 
This series is rational, therefore it admits a limit when $T$ goes to infinity.
The limit, which does not depend on the choice of $\omega$ by \cite[Proposition 8.1]{NicSeb07a}, is called the \emph{motivic volume} of $\X$ and denoted by
\[
\Vol(\mathcal X)= -\lim_{T\to \infty} S(\mathcal X, \omega, T) \in \mathcal M^{\hat{\mu}}_{\mathcal X_k}\,.
\]

Given a smooth connected $k$-variety $U$ endowed with a fixed volume form $\omega$ and a dominant morphism $f \colon U \to \A^1_k$, by taking a motivic volume of the compact $K$-analytic space $\big(\widehat{U_R}\big)^\beth$ one can retrieve the motivic nearby cycles $\mathcal S_f$, see \cite[Theorem 9.13]{NicSeb07a} and \cite[Proposition 7.7]{Hartmann}.
More precisely, $\big(\widehat{U_R}\big)^\beth$ comes endowed with a canonical gauge form, its \emph{Gelfand--Leray form} $\omega/df$, and we have
\[
\Vol\big(\widehat{U_R}\big) = \mathbb L^{-(\dim U-1)}S_f \in \mathcal M_{U_k}^{\hat{\mu}}.
\]
Its direct image by $f_{!}$ does not depend on $\widehat{U_R}$, it is therefore called the \emph{motivic volume of $\Ubeth$}, denoted by $\Vol\left(\Ubeth\right)$.
We have 
\begin{equation} \label{eq:Sf}
\Vol\left(\Ubeth\right) = \mathbb L^{-(\dim U-1)} f_{!} S_f \in \mathcal M_{k}^{\hat{\mu}}\,.
\end{equation}

\subsection{The motivic volume of the analytification of an algebraic variety}
\label{subsection_manu}
Let $U$ be a smooth connected $k$-variety endowed with a fixed volume form $\omega$, let $f \colon U \to \A^1_k$ be a dominant morphism, let $(X,\hat{f})$ be a compactification of $f$, and let $F$ be the closed subset $X\setminus U$.
After seeing the results of the previous section one would expect to be able to retrieve the motive $S_{\hat{f},U}$ from the (non-compact) $K$-analytic space $(X_K)^\an$, since the latter contains $\big(\widehat{U_R}\big)^\beth$ and all the points corresponding to the arcs on $X$ that have origin in $F$ but are not arcs in $F$.
This is indeed the case, as shown by Bultot \cite[\S2.4]{Bultot-Phd}.

We will briefly recall Bultot's construction. 
Without loss of generality, we can assume that the compactification $(X,\hat{f})$ of $f$ is normal, that $F$ is a Cartier divisor on $X$, and that we have additional data $(W_\alpha, g_\alpha)_\alpha$, where $\{W_\alpha\}_\alpha$ is a finite cover of $(X_K)^{\an}$ by affinoid domains and, for each $\alpha$, $g_\alpha$ is an analytic function on $W_\alpha$ such that $W_\alpha\cap (F_K)^\mathrm{an}$ is defined by $g_\alpha=0$ in $W_\alpha$.
For every nonnegative integer $\gamma$, we set 
\[
U_\gamma := \bigcup_{\alpha \in A} \big\{ x \in W_\alpha \;\big|\; \abs{{g}_{\alpha}(x)} \geq \abs{t}^\gamma  \big\}.
\]
The $U_\gamma$ form an increasing sequence of compact analytic domains of $(U_K)^\an$ such that
\[
(U_K)^\an	 = \bigcup_{\gamma \geq 0} U_\gamma = \bigcup_{\alpha \in A} \big\{  x\in W_\alpha \;\big|\; \abs{g_\alpha(x)}>0 \big\}.
\]

In order to see that the motivic volumes of the $U_\gamma$ stabilize, it is useful to compute them relatively to the formal scheme $\widehat{X_R}$ in order to be able to compare them in the Grothendieck ring $\mathcal M_{\mathcal X_k}^{\hat{\mu}}$.
This is possible since, if $\mathcal U$ and $\mathcal U'$ are two weak Néron models of $U_\gamma$ mapping to $\widehat{X_R}$, then there exists a third weak Néron model of $U_\gamma$ dominating both, hence the images of the integral $\int_{\mathcal U} \omega/df$  in $\mathcal M_{\mathcal X_k}^{\hat{\mu}}$ depends only on $U_\gamma$ and $\widehat{X_R}$, but not on the choice of $\mathcal U$.
In particular, the image of the motivic volume $\Vol(\mathcal U)$ in $\mathcal M_{\mathcal X_k}^{\hat{\mu}}$ depends on $U_\gamma$ and not on $\mathcal U$; we denote this motive by $\Vol_{\widehat{X_R}}(U_\gamma)$.

Bultot then proves that there exists an integer $\gamma_0\geq 0$ such that, for every $\gamma\geq \gamma_0$ we have 
\[
\Vol_{\left(\widehat{X_R}\right)}(U_\gamma) = \Vol_{\left(\widehat{X_R}\right)}(U_{\gamma_0}) \in \mathcal M_{X_k}^{\hat{\mu}}\,.
\]
This motive does not depend on the sequence $(U_\gamma)$, we thus denote it by $\Vol_{\widehat{X_R}}\big((U_K)^{\an}\big)$
and call it \emph{motivic volume of $(U_K)^{\an}$ over $\widehat{X_R}$} (mind that this motive is 
called {motivic volume of $U_K$ over $X_k$} in \cite{Bultot-Phd}).
Furthermore, Bultot gives a formula computing $\Vol_{\widehat{X_R}}\big((U_K)^{\an}\big)$ in terms of the combinatorics of a good compactification of $U$, and by comparing it with the analogous formula of Guibert--Loeser--Merle he shows that, as expected, we have 
\begin{equation}\label{eq:SfU}
\Vol_{\widehat{X_R}}\big((U_K)^\an\big) = \mathbb L^{-(\dim U-1)}S_{\hat{f},U} \in \mathcal M_{X_k}^{\hat{\mu}}.
\end{equation}
Since the direct image $\hat{f}_{!}S_{\hat{f},U}$ in $\mathcal M_{k}^{\hat{\mu}}$ does not depend on the choice of $X$, we can define the motivic volume of $(U_K)^{\an}$ as 
\begin{equation*}
	\Vol\big((U_K)^{\an}\big) = \hat{f}_{!}\big(\Vol_{\widehat{X_R}}\big((U_K)^\an\big)\big) \in \mathcal M_{k}^{\hat{\mu}}.
\end{equation*}

\subsection{Analytic nearby fiber at infinity}
\label{subsection_analyticinfinity}

As before, let $U$ be a smooth $k$-variety and let $f\colon U\to\A^1_k$ be a dominant morphism.
We can now introduce non-archimedean analytic version of the {motivic} nearby fibers at infinity of $f$.

\begin{defn} 
	Using these notations, we define the \emph{analytic nearby fiber at infinity of $f$ for the value $0$} to be the non-archimedean $K$-analytic space 
	\[
	\F_{f,0}^{\infty} = (U_K)^{\an} \setminus \Ubeth.
	\]
\end{defn}

Similarly, for every $a$ in $\A^1_k(k)$, we define the \emph{analytic nearby fiber at infinity of $f$ for the value $a$} to be the analytic nearby fiber at infinity of $f-a$ for the value $0$; we denote it by $\F_{f,a}^{\infty}$.
Observe that these analytic spaces are clearly independent of the choice of a compactification of $f$.

\begin{rem}
	The definition of $\F_{f,0}^{\infty}$ is analogous to the one of $S_{f,0}^\infty$, for which one chooses a compactification $(X,\hat f)$ of $(U,f)$ and considers only arcs with origin in $F={X\setminus U}$.
	Indeed, as we observed in Remark~\ref{remarks_pointsandarcs} the correct analogue for the origin of an arc on $X$ is the specialization $\mathrm{sp}_{\widehat{X_R}}(x)$ of a point $x$ of $\big(\widehat{X_R}\big)^\beth$, and the inclusion of $\mathcal F_{f,0}^\infty$ in $\big(\widehat{X_R}\big)^\beth=(X_K)^\an$ induces an isomorphism
		\[
		\mathcal F_{f,0}^\infty \cong \mathrm{sp}_{\widehat{X_R}}^{-1}(F) \setminus (F_K)^\an \;,
		\]
	since we have $(U_K)^\an\cong(X_K)^\an \setminus (F_K)^\an$ by the first part of Remark~\ref{remarks_nonarchimedeaninclusions}, and $\Ubeth \cong \mathrm{sp}_{\widehat{X_R}}^{-1}(U)$ by the third part of the same remark.
	Observe that removing $(F_K)^\an$ is necessary to obtain a space that does not depend on the choice of the compactification $(X,\hat f)$; this independence can also be seen as a consequence of the valuative criterion of properness, as two compactifications can always be dominated by a common third one.
\end{rem}

 We declare the volume of $\F_{f,0}^{\infty}$ to be
\begin{equation*}
	\Vol\left(\F_{f,0}^{\infty}\right) = \Vol\big( (U_K)^{an} \big) - \Vol\left(\Ubeth, \right) \in \mathcal M_{k}^{\hat{\mu}}.
\end{equation*}

\begin{thm} 
Let $U$ be a smooth connected $k$-variety and let $f\colon U\to \A^1_k$ be a dominant morphism.
Then we have an equality
\[
	\Vol(\mathcal F_{f,0}^{\infty}) = \mathbb L^{-(\dim U-1)} S_{f,0}^{\infty} \in \mathcal M_{k}^{\hat{\mu}}
\]
\end{thm}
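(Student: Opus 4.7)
The plan is to assemble the identity directly from the two volume formulas proved in the previous subsections, combined with the functoriality of the direct image for the composition $f = \hat{f} \circ i$.

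First, I would fix a compactification $(X, \hat f)$ of $(U, f)$, write $F = X \setminus U$, and unpack the definition
\[
\Vol\bigl(\mathcal F_{f,0}^{\infty}\bigr) = \Vol\bigl((U_K)^{\mathrm{an}}\bigr) - \Vol\bigl(\Ubeth\bigr) \in \mathcal M_{k}^{\hat{\mu}}.
\]
The second term is the easy one: formula \ref{eq:Sf} from Subsection~\ref{subsection_motivicformal} gives
\[
\Vol\bigl(\Ubeth\bigr) = \mathbb L^{-(\dim U - 1)} f_{!} S_{f} \in \mathcal M_{k}^{\hat{\mu}}.
\]

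For the first term, I would invoke Bultot's construction from Subsection~\ref{subsection_manu}: by \ref{eq:SfU} one has
\[
\Vol_{\widehat{X_R}}\bigl((U_K)^{\mathrm{an}}\bigr) = \mathbb L^{-(\dim U - 1)} S_{\hat f, U} \in \mathcal M_{X_k}^{\hat{\mu}},
\]
and by the definition recalled immediately afterwards, $\Vol\bigl((U_K)^{\mathrm{an}}\bigr) = \hat{f}_{!}\bigl(\Vol_{\widehat{X_R}}\bigl((U_K)^{\mathrm{an}}\bigr)\bigr)$. Pushing forward and using $\mathbb L^{-(\dim U -1)}$-linearity yields
\[
\Vol\bigl((U_K)^{\mathrm{an}}\bigr) = \mathbb L^{-(\dim U - 1)} \hat{f}_{!} S_{\hat f, U} \in \mathcal M_{k}^{\hat{\mu}}.
\]

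Finally, subtracting the two displays and factoring out $\mathbb L^{-(\dim U - 1)}$ gives
\[
\Vol\bigl(\mathcal F_{f,0}^{\infty}\bigr) = \mathbb L^{-(\dim U - 1)}\bigl(\hat{f}_{!} S_{\hat f, U} - f_{!} S_{f}\bigr).
\]
Since $f = \hat{f} \circ i$, the functoriality of the direct image gives $f_{!} = \hat{f}_{!} \circ i_{!}$, so $f_{!} S_{f} = \hat{f}_{!}(i_{!} S_{f})$, and hence the right-hand side equals $\mathbb L^{-(\dim U - 1)} \hat{f}_{!}\bigl(S_{\hat f, U} - i_{!} S_{f}\bigr) = \mathbb L^{-(\dim U - 1)} S_{f,0}^{\infty}$ by the definition of $S_{f,0}^{\infty}$ recalled in Subsection~\ref{subsection:motivic-invariant-infinity}.

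There is no real obstacle here, since all the non-trivial content, namely the identification of the motivic volumes of $\Ubeth$ and of $(U_K)^{\mathrm{an}}$ with motivic nearby cycles, has already been established in \cite{NicSeb07a}, \cite{Hartmann}, and \cite{Bultot-Phd}; the only point to keep in mind is that the formula $\Vol\bigl((U_K)^{\mathrm{an}}\bigr) = \hat{f}_{!}\Vol_{\widehat{X_R}}\bigl((U_K)^{\mathrm{an}}\bigr)$ does require checking independence of the compactification after pushing forward to $\mathcal M_{k}^{\hat{\mu}}$, which is precisely the content of Bultot's result. As a by-product, this argument also reproves the independence of $S_{f,0}^{\infty}$ from the chosen compactification, a fact that was originally established by a resolution computation in \cite{Rai11}.
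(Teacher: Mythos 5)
Your proposal is correct and is precisely the paper's own argument: the paper simply states that the result follows from the definition of $\Vol(\mathcal F_{f,0}^{\infty})$ together with the equalities \ref{eq:Sf} and \ref{eq:SfU}, and your write-up spells out exactly that computation, including the functoriality $f_! = \hat{f}_! \circ i_!$ used to match the definition of $S_{f,0}^{\infty}$. The closing remark about recovering the compactification-independence of $S_{f,0}^{\infty}$ is also consistent with what the paper observes.
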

\begin{proof}
The result follows immediately from the definition of the volume and the equalities \ref{eq:Sf} and \ref{eq:SfU}.
\end{proof}

\begin{rem} \label{rem:HK}
	Another approach to motivic integration was developed by Hrushovski--Kazhdan \cite{HruKaz06} with model theoretic methods and gives a group morphism $\Vol^{HK}$ from the Grothendieck ring of semi-algebraic sets over the valued field $K$ to $\mathcal M_k^{\hat{\mu}}$ (see also \cite[Theorem 2.5.1]{NP}).
	In particular, the volume of any locally closed subset of $(U_K)^\an$ is defined, and the equality $\Vol^{HK}(\mathcal F_{f,0}^\infty) = \Vol^{HK}\big((U_K)^\mathrm{an}\big) - \Vol^{HK}\big(\Ubeth\big)$ holds naturally in this context.
	Moreover, the motivic volumes in \cite{HruKaz06} can be computed in an analogous way as the volumes in \cite{GuiLoeMer06a} and \cite{Bultot} in terms of suitable resolutions of singularities (see \cite[Theorem 2.6.1]{NP}), and this can be used to show that $\Vol^{HK}\big(\Ubeth\big)=f_!S_f$, see \cite[Corollary 2.6.2]{NP} or \cite{Forey}. One should similarly be able to deduce that $\Vol^{HK}\big((U_K)^\mathrm{an}\big)=\hat{f_!}S_{\hat{f},U}$, so that $\Vol^{HK}(\mathcal F_{f,0}^\infty)$ is the volume $\Vol(\mathcal F_{f,0}^{\infty})$ that we defined above.
	Observe that, since our definition of the analytic nearby fiber at infinity is independent of the choice of a compactification of $f$, as is the morphism $\Vol^{HK}$, this approach would also yield a compactification-independent definition of the motivic nearby cyles at infinity.
\end{rem}

\subsection{Serre bifurcation set}
\label{subsection_serrebifurcation}
We will now recall the notion of motivic Serre invariant and use it to define another bifurcation set $B_f^{\mathrm{ser}}$ which we call the Serre bifurcation set of $f$.

Since the Euler characteristic is additive on {the category of} $k$-varieties, it gives rise to a morphism $\chi_c \colon \mathcal M_k^{\hat{\mu}} \to \Z$.
Moreover, since $\chi_c(\mathbb L-1)=\chi_c(\mathbb G_{m,k})=0$, this morphism factors through a morphism
\[
\chi_c \colon \mathcal M_k^{\hat{\mu}}/(\mathbb L-1) \longrightarrow \Z.
\]
Therefore, when interested in working with the Euler characteristic it is often useful to study the class of a variety modulo $\mathbb L-1$.
Now, the constructions made in this section can all be done modulo $\mathbb L-1$, which leads to some simplifications.
Indeed, if $\X$ is a generically smooth and flat formal $R$-scheme of finite type endowed with a $\mu_n$-action, $\omega$ is a $\mu_n$-closed gauge form on $\X^\beth$, and $h\colon \mathcal Y \to \X$ is a $\mu_n$-equivariant Néron smoothening, then in $\mathcal M_k^{\hat{\mu}}$ we have
\[
\int_{\mathcal X} \abs{\omega} \equiv 
\sum_{\substack{C \text{ connected}\\\text{component of }\mathcal Y_k}} [C]\mathbb L^{-\ord_{C}\big((h^\beth)^*(\omega)\big)} \equiv
\sum_{C} [C] =
[\mathcal Y_k]
\,\,\,\,\,\,\,\,\,
\mathrm{ mod }\,\,(\mathbb L-1).
\]
This element of $\mathcal M_k^{\hat{\mu}}/(\mathbb L-1)$, that only depends on $\mathcal X$ and not on $\omega$, is called the \emph{motivic Serre invariant} of $\X$, 
and denoted by $S(\X)$.

\begin{rem} 
	The motivic Serre invariant of a generically smooth formal $R$-scheme of finite type was introduced by Loeser--Sebag \cite{LoeSeb03b}, and developed by Nicaise--Sebag \cite{NicSeb07a, NicSeb07b, NicSeb11}, Bultot \cite{Bultot-Phd} and Hartmann \cite{Hartmann}.
	It generalizes an invariant introduced by Serre in \cite{Serre-65} in order to classify the compact analytic manifolds over a local field $L$ and defined using classical $p$-adic integration with value in the ring $\mathbb Z/(q-1)\mathbb Z$, where $q$ is the cardinality of the residue field $l$ of $L$. 
	Counting  $l$-rational points yields a canonical morphism $\mathcal M_l /(\mathbb L - 1) \to \mathbb Z/(q-1)\mathbb Z$, and Loeser--Sebag showed that the image by this morphism of the motivic Serre invariant $S(X)$ of a smooth and compact $L$-analytic space $X$ is equal to the classical Serre invariant of the underlying compact manifold.
\end{rem}

We then consider the motive
\begin{equation} \label{Serre:formel}
\overline{S}(\mathcal X) := \Vol(\mathcal X) \mod (\mathbb L-1) \in  \mathcal M_{k}^{\hat{\mu}}/(\mathbb L-1)\;,
\end{equation}
which can also be obtained as the limit of the generating series $-\sum_{n\geq 1} S(\mathcal X(n))T^n$.
Similarly, if $X$ is a smooth $K$-analytic space admitting a weak Néron model $\mathcal U$ over $R$, one also sets $S(X)=S(\mathcal U)$, and we obtain a motive
	\begin{equation} \label{Serre:analytic}
\overline{S}(X) = \Vol(X) \mod (\mathbb L-1) \in  \mathcal M_{k}^{\hat{\mu}}/(\mathbb L-1)
\end{equation}
that is also the limit of an analogous generating series.

It follows from the results of Bultot that, if $U, X, \hat{f}, \{U_\gamma\}_\gamma$ are as in subsection~\ref{subsection_manu}, then
\[
	\overline{S}\big({(U_K)^{\an}}\big):=\overline{S}(U_\gamma) \in \mathcal M_{k}^{\hat{\mu}}/(\mathbb L-1)
\]
only depends on $U$ and $f$ and not on $\gamma$, if $\gamma$ is large enough, nor on $X$, $\{U_\gamma\}_\gamma$,  and $\hat{f}$.
We can then define the \emph{Serre invariant} of $\mathcal F_{f,0}^{\infty}$ as
\[
\overline{S}(\mathcal F_{f,0}^{\infty})
 := 
 \overline{S}\big({(U_K)^\an}\big) - \overline{S}\big(\widehat{U_R}\big) 
\in \mathcal M_{k}^{\hat{\mu}}/(\mathbb L-1) \;.
\]

\begin{defn}
Let $U$ be a smooth $k$-variety and let $f\colon U\to \A^1_k$ be a dominant morphism.
The \emph{Serre bifurcation set} of $f$ is  
\[
B_f^{\mathrm{ser}} = \big\{ a \in  \mathbb A^{1}_{\mathbb C} \;\big|\; \overline{S}\left(\mathcal F_{f,a}^{\infty}\right) \neq 0\big\} \cup \text{disc}(f) \,.
\]
\end{defn}

\begin{thm} 
Let $U$ be a smooth $k$-variety and let $f\colon U\to \A^1_k$ be a dominant morphism.
Then we have an equality
\begin{equation*} \label{formule-Serre} 
	\overline{S}(\mathcal F_{f,0}^{\infty}) = S_{f,0}^{\infty} \mod (\mathbb L-1)
\end{equation*}
in $\mathcal M_{k}^{\hat{\mu}}/(\mathbb L-1)$, 
and $B_f^{\mathrm{ser}}$ is contained in $B_f^{\mot}$.
\end{thm}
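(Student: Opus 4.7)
The plan is to assemble the equality directly from the previous theorem (computing $\Vol(\mathcal F_{f,0}^{\infty})$) by reducing modulo $\mathbb L - 1$, and then to derive the inclusion of bifurcation sets as an immediate consequence.

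For the first part, I would first unwind the definition of $\overline{S}(\mathcal F_{f,0}^{\infty})$. By construction we have $\overline{S}(\mathcal F_{f,0}^{\infty}) := \overline{S}\big((U_K)^\an\big) - \overline{S}\big(\widehat{U_R}\big)$, and by the formulas \ref{Serre:formel} and \ref{Serre:analytic} (together with the stabilization statement from subsection \ref{subsection_manu} that lets us identify $\overline{S}\big((U_K)^\an\big)$ with $\Vol\big((U_K)^\an\big) \bmod (\mathbb L - 1)$ for $\gamma$ large enough), this difference is nothing but $\Vol(\mathcal F_{f,0}^{\infty}) \bmod (\mathbb L - 1)$. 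The previous theorem then gives $\Vol(\mathcal F_{f,0}^{\infty}) = \mathbb L^{-(\dim U - 1)} S_{f,0}^{\infty}$ in $\mathcal M_k^{\hat{\mu}}$. Since $\mathbb L \equiv 1 \bmod (\mathbb L - 1)$, reducing this equality modulo $\mathbb L - 1$ yields
\[
\overline{S}(\mathcal F_{f,0}^{\infty}) = S_{f,0}^{\infty} \bmod (\mathbb L - 1),
\]
as wanted. The same argument applied to $f - a$ gives the corresponding equality for every value $a \in \A^1_k(k)$.

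For the inclusion $B_f^{\mathrm{ser}} \subset B_f^{\mot}$, I would argue contrapositively. Let $a \in B_f^{\mathrm{ser}}$. If $a \in \mathrm{disc}(f)$, then $a \in B_f^{\mot}$ by definition, so we may assume $\overline{S}(\mathcal F_{f,a}^{\infty}) \neq 0$ in $\mathcal M_k^{\hat{\mu}} / (\mathbb L - 1)$. By the equality just established, this means precisely that $S_{f,a}^{\infty} \not\equiv 0 \bmod (\mathbb L - 1)$, which forces $S_{f,a}^{\infty} \neq 0$ in $\mathcal M_k^{\hat{\mu}}$; hence $a \in B_f^{\mot}$.

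There is no real obstacle here: the content of the statement is entirely packaged in the previous theorem together with the formal behavior of the reduction morphism $\mathcal M_k^{\hat{\mu}} \to \mathcal M_k^{\hat{\mu}}/(\mathbb L - 1)$. The only point that requires a moment of care is the compatibility of the definition of $\overline{S}\big((U_K)^\an\big)$ (defined via the exhausting sequence $\{U_\gamma\}$) with the reduction of $\Vol\big((U_K)^\an\big)$ modulo $\mathbb L - 1$, which follows from Bultot's stabilization result recalled in subsection \ref{subsection_manu}.
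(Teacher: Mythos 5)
Your proof is correct and is essentially the paper's own argument: the paper reduces the two volume formulas (\ref{eq:Sf}) and (\ref{eq:SfU}) modulo $\mathbb L-1$ separately and then subtracts, while you first package them into the previous theorem's statement $\Vol(\mathcal F_{f,0}^{\infty}) = \mathbb L^{-(\dim U-1)}S_{f,0}^{\infty}$ and then reduce, which is the same computation organized slightly differently. You also spell out the deduction of $B_f^{\mathrm{ser}} \subset B_f^{\mathrm{mot}}$, which the paper leaves implicit; your reasoning there is exactly the intended one.
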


\begin{proof}
By combining the definition of $\overline{S}(\mathcal F_{f,0}^{\infty})$ and \ref{Serre:formel} and \ref{Serre:analytic} with the formulas \ref{eq:Sf} and \ref{eq:SfU}, we deduce that the two equalities
\[
	\overline{S}\big({(U_K)^\an}\big) = 
\mathbb L^{-(\dim U-1)}\hat{f}_{!}\left(S_{\hat{f},U}\right) \mod (\mathbb L-1)
\]
and 
\[
\overline{S}\left(\widehat{U_R}\right) = 
\mathbb L^{-(\dim U-1)}f_{!}\left(S_f\right) \mod (\mathbb L-1)
\]
hold in $\mathcal M_k^{\hat{\mu}} / (\mathbb L-1)$, from which the theorem follows.
\end{proof}

\begin{cor}  Let $f$ be a polynomial in $\C[x_1,\dots,x_d]$. 
	Then we have the inclusions
	\begin{equation*}
	\big\{ a \in \A^1_\C(\C)  \;\big|\; 
		\chi_c\big(f^{-1}(a)\big) \neq \chi_c\big(f^{-1}(a_\mathrm{gen})\big)
	\big\}
	\subset B_f^{\mathrm{ser}} \subset B_f^\mathrm{mot}\;.
	\end{equation*}

\end{cor}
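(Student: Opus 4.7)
The strategy is to treat the two inclusions separately; both follow directly from the preceding theorem $\overline{S}(\mathcal F_{f,a}^{\infty}) = S_{f,a}^{\infty} \mod (\mathbb L-1)$ combined with part (\ref{thm:lambda_2}) of Theorem~\ref{thm:lambda} and the fact that $\chi_c$ descends to $\mathcal M_k^{\hat{\mu}}/(\mathbb L-1)$.

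For the right inclusion $B_f^{\mathrm{ser}} \subset B_f^\mathrm{mot}$, I would argue by contraposition. Suppose $a \notin B_f^\mathrm{mot}$; then by definition $a \notin \mathrm{disc}(f)$ and $S_{f,a}^{\infty} = 0$ in $\mathcal M_k^{\hat{\mu}}$. The previous theorem then gives $\overline{S}(\mathcal F_{f,a}^{\infty}) = S_{f,a}^{\infty} \mod (\mathbb L-1) = 0$, and combined with $a \notin \mathrm{disc}(f)$ this yields $a \notin B_f^{\mathrm{ser}}$.

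For the left inclusion, let $a \in \A^1_\C(\C)$ be such that $\chi_c\big(f^{-1}(a)\big) \neq \chi_c\big(f^{-1}(a_\mathrm{gen})\big)$. If $a \in \mathrm{disc}(f)$ there is nothing to prove, so assume $a$ is not a critical value of $f$. Then part (\ref{thm:lambda_2}) of Theorem~\ref{thm:lambda} applies and gives
\[
\chi_c\big(S_{f,a}^{\infty}\big) = \chi_c\big(f^{-1}(a_\mathrm{gen})\big) - \chi_c\big(f^{-1}(a)\big) \neq 0.
\]
Since $\chi_c$ vanishes on $\mathbb L - 1$ and hence factors through a morphism $\mathcal M_\C^{\hat{\mu}}/(\mathbb L-1) \to \Z$, the nonvanishing of $\chi_c\big(S_{f,a}^{\infty}\big)$ implies that the class of $S_{f,a}^{\infty}$ in $\mathcal M_\C^{\hat{\mu}}/(\mathbb L-1)$ is nonzero. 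By the preceding theorem, this class equals $\overline{S}(\mathcal F_{f,a}^{\infty})$, so $\overline{S}(\mathcal F_{f,a}^{\infty}) \neq 0$ and therefore $a \in B_f^{\mathrm{ser}}$.

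There is no real obstacle to overcome: the two inclusions are essentially formal consequences of the equality modulo $\mathbb L-1$ proved in the theorem, of the Euler characteristic computation of Theorem~\ref{thm:lambda}(\ref{thm:lambda_2}), and of the basic fact that $\chi_c(\mathbb L -1)=0$. The only small point that needs care is to treat separately the case $a\in\mathrm{disc}(f)$ in the left inclusion, since Theorem~\ref{thm:lambda}(\ref{thm:lambda_2}) requires $a$ to be a regular value; but this case is trivial because such an $a$ lies in $B_f^{\mathrm{ser}}$ by definition.
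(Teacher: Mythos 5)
Your argument is correct and follows essentially the same route as the paper: the right inclusion is already part of the preceding theorem (your contrapositive argument is a valid re-derivation), and the left inclusion is obtained exactly as in the paper by combining the nonvanishing of $\chi_c\big(S_{f,a}^{\infty}\big)$ from Theorem~\ref{thm:lambda}(\ref{thm:lambda_2}) (as used in Corollary~\ref{thm:inclusion-Btop-Bmot}) with the identity $\chi_c\big(\overline{S}(\mathcal F_{f,a}^{\infty})\big) = \chi_c\big(S_{f,a}^{\infty}\big)$, which follows from the theorem's equality modulo $\mathbb L-1$ and the fact that $\chi_c$ factors through $\mathcal M_k^{\hat{\mu}}/(\mathbb L-1)$.
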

\begin{proof}
The result follows from from Corollary~\ref{thm:inclusion-Btop-Bmot}, together with the fact that for any value $a$ the Euler characteristic $\chi_c \big(\overline{S}(\mathcal F_{f,a}^{\infty})\big)$ and $\chi_{c}\big(S_{f,a}^{\infty}\big)$ are equal.
\end{proof}

\begin{rem}
	In particular, whenever 
	$B_f^\mathrm{top} = \big\{ a \in \A^1_\C(\C)  \;\big|\; 
	\chi_c\big(f^{-1}(a)\big) \neq \chi_c\big(f^{-1}(a_\mathrm{gen})\big)\big\}$,
	as in the case of plane curves, or more generally whenever $f$ has isolated singularities at infinity, 
	we have a chain of inclusions 
	$B_f^\mathrm{top} \subset B_f^{\mathrm{ser}} \subset B_f^{\mot}$.
\end{rem}

\bibliographystyle{plain}
\bibliography{biblio_complete.bib}

\begin{thebibliography}{10}

\bibitem{SGA7}
{\em Groupes de monodromie en g\'eom\'etrie alg\'ebrique. {II}}.
\newblock Lecture Notes in Mathematics, Vol. 340. Springer-Verlag, Berlin-New
  York, 1973.
\newblock S\'eminaire de G\'eom\'etrie Alg\'ebrique du Bois-Marie 1967--1969
  (SGA 7 II), Dirig\'e par P. Deligne et N. Katz.

\bibitem{ArtLueMel00b}
Enrique Artal~Bartolo, Ignacio Luengo~Velasco, and Alejandro
  Melle-Hern{{\'a}}ndez.
\newblock {Milnor number at infinity, topology and {N}ewton boundary of a
  polynomial function}.
\newblock {\em Math. Z.}, 233(4):679--696, 2000.

\bibitem{Bit05a}
Franziska Bittner.
\newblock {On motivic zeta functions and the motivic nearby fiber}.
\newblock {\em Math. Z.}, 249(1):63--83, 2005.

\bibitem{Bultot}
Emmanuel Bultot.
\newblock Computing zeta functions on log smooth models.
\newblock {\em C. R. Math. Acad. Sci. Paris}, 353(3):261--264, 2015.

\bibitem{Bultot-Phd}
Emmanuel Bultot.
\newblock {\em Motivic integration and logarithmic geometry}.
\newblock PhD thesis, KU Leuven, 2015.
\newblock Available on arXiv (arXiv:1505.05688).

\bibitem{Cas96}
Pierrette Cassou-Nogu\`es.
\newblock Sur la g\'en\'eralisation d'un th\'eor\`eme de {K}ouchnirenko.
\newblock {\em Compositio Math.}, 103(1):95--121, 1996.

\bibitem{CluLoe08a}
Raf Cluckers and Fran\c{c}ois Loeser.
\newblock {Constructible motivic functions and motivic integration}.
\newblock {\em Invent. Math.}, 173(1):23--121, 2008.

\bibitem{DenLoe98b}
Jan Denef and Fran\c{c}ois Loeser.
\newblock {Motivic {I}gusa zeta functions}.
\newblock {\em J. Algebraic Geom.}, 7(3):505--537, 1998.

\bibitem{DenLoe99a}
Jan Denef and Fran\c{c}ois Loeser.
\newblock {Germs of arcs on singular algebraic varieties and motivic
  integration}.
\newblock {\em Invent. Math.}, 135(1):201--232, 1999.

\bibitem{DenLoe01b}
Jan Denef and Fran\c{c}ois Loeser.
\newblock {Geometry on arc spaces of algebraic varieties}.
\newblock In {\em {European {C}ongress of {M}athematics, {V}ol. {I}
  ({B}arcelona, 2000)}}, volume 201 of {\em {Progr. Math.}}, pages 327--348.
  Birkh{\"a}user, Basel, 2001.

\bibitem{DenLoe02a}
Jan Denef and Fran\c{c}ois Loeser.
\newblock {Lefschetz numbers of iterates of the monodromy and truncated arcs}.
\newblock {\em Topology}, 41(5):1031--1040, 2002.

\bibitem{Dim04a}
Alexandru Dimca.
\newblock {\em {Sheaves in topology}}.
\newblock {Universitext}. Springer-Verlag, Berlin, 2004.

\bibitem{Forey}
Arthur Forey.
\newblock Virtual rigid motives of semi-algebraic sets.
\newblock arXiv:1706.07233.

\bibitem{GuiLoeMer05a}
Gil Guibert, Fran\c{c}ois Loeser, and Michel Merle.
\newblock {Nearby cycles and composition with a nondegenerate polynomial}.
\newblock {\em Int. Math. Res. Not.}, (31):1873--1888, 2005.

\bibitem{GuiLoeMer06a}
Gil Guibert, Fran\c{c}ois Loeser, and Michel Merle.
\newblock {Iterated vanishing cycles, convolution, and a motivic analogue of a
  conjecture of {S}teenbrink}.
\newblock {\em Duke Math. J.}, 132(3):409--457, 2006.

\bibitem{Hartmann}
Annabelle Hartmann.
\newblock Equivariant motivic integration on formal schemes and the motivic
  zeta function.
\newblock {\em arXiv:1511.08656}.

\bibitem{HruKaz06}
Ehud Hrushovski and David Kazhdan.
\newblock Integration in valued fields.
\newblock In {\em Algebraic geometry and number theory}, volume 253 of {\em
  Progr. Math.}, pages 261--405. Birkh\"auser Boston, Boston, MA, 2006.

\bibitem{Kashiwara-Schapira}
Masaki {Kashiwara} and Pierre {Schapira}.
\newblock {\em {Sheaves on manifolds. With a short history ``Les d\'ebuts de la
  th\'eorie des faisceaux'' by Christian Houzel.}}
\newblock Berlin etc.: Springer-Verlag, 1990.

\bibitem{Kou76a}
A.~G. Kouchnirenko.
\newblock {Poly{\`e}dres de {N}ewton et nombres de {M}ilnor}.
\newblock {\em Invent. Math.}, 32(1):1--31, 1976.

\bibitem{Loe09a}
Fran\c{c}ois Loeser.
\newblock {Seattle lectures on motivic integration}.
\newblock In {\em {Algebraic geometry---{S}eattle 2005. {P}art 2}}, volume~80
  of {\em {Proc. Sympos. Pure Math.}}, pages 745--784. Amer. Math. Soc.,
  Providence, RI, 2009.

\bibitem{LoeSeb03b}
Fran\c{c}ois Loeser and Julien Sebag.
\newblock {Motivic integration on smooth rigid varieties and invariants of
  degenerations}.
\newblock {\em Duke Math. J.}, 119(2):315--344, 2003.

\bibitem{Loo02a}
Eduard Looijenga.
\newblock {Motivic measures}.
\newblock {\em Ast{\'e}risque}, (276):267--297, 2002.
\newblock S{\'e}minaire Bourbaki, Vol.\ 1999/2000.

\bibitem{Nicaise2008}
Johannes Nicaise.
\newblock Formal and rigid geometry: an intuitive introduction and some
  applications.
\newblock {\em Enseign. Math. (2)}, 54(3-4):213--249, 2008.

\bibitem{NP}
Johannes Nicaise and Sam Payne.
\newblock A tropical motivic {F}ubini theorem with applications to
  {D}onaldson--{T}homas theory, arxiv:1703.10228.

\bibitem{NicSeb07b}
Johannes Nicaise and Julien Sebag.
\newblock Motivic {S}erre invariants of curves.
\newblock {\em Manuscripta Math.}, 123(2):105--132, 2007.

\bibitem{NicSeb07a}
Johannes. Nicaise and Julien Sebag.
\newblock {Motivic {S}erre invariants, ramification, and the analytic {M}ilnor
  fiber}.
\newblock {\em Invent. Math.}, 168(1):133--173, 2007.

\bibitem{NicSeb11}
Johannes Nicaise and Julien Sebag.
\newblock Motivic invariants of rigid varieties, and applications to complex
  singularities.
\newblock In {\em Motivic integration and its interactions with model theory
  and non-{A}rchimedean geometry. {V}olume {I}}, volume 383 of {\em London
  Math. Soc. Lecture Note Ser.}, pages 244--304. Cambridge Univ. Press,
  Cambridge, 2011.

\bibitem{Par88}
Adam Parusi\'nski.
\newblock A generalization of the {M}ilnor number.
\newblock {\em Math. Ann.}, 281(2):247--254, 1988.

\bibitem{Par95a}
Adam Parusi{\'n}ski.
\newblock {On the bifurcation set of complex polynomial with isolated
  singularities at infinity}.
\newblock {\em Compositio Math.}, 97(3):369--384, 1995.

\bibitem{Par97a}
Adam Parusi{\'n}ski.
\newblock {A note on singularities at infinity of complex polynomials}.
\newblock In {\em {Symplectic singularities and geometry of gauge fields
  ({W}arsaw, 1995)}}, volume~39 of {\em {Banach Center Publ.}}, pages 131--141.
  Polish Acad. Sci., Warsaw, 1997.

\bibitem{Pha83a}
Fr{\'e}d{\'e}ric Pham.
\newblock {Vanishing homologies and the {$n$} variable saddlepoint method}.
\newblock In {\em {Singularities, {P}art 2 ({A}rcata, {C}alif., 1981)}},
  volume~40 of {\em {Proc. Sympos. Pure Math.}}, pages 319--333. Amer. Math.
  Soc., Providence, RI, 1983.

\bibitem{Rai11}
Michel Raibaut.
\newblock {Singularit{\'e}s {\`a} l'infini et int{\'e}gration motivique}.
\newblock {\em Bull. Soc. Math. France}, 140(1):51--100, 2012.

\bibitem{Seb04a}
Julien Sebag.
\newblock {Int{\'e}gration motivique sur les sch{\'e}mas formels}.
\newblock {\em Bull. Soc. Math. France}, 132(1):1--54, 2004.

\bibitem{Serre-65}
Jean-Pierre Serre.
\newblock Classification des vari\'et\'es analytiques {$p$}-adiques compactes.
\newblock {\em Topology}, 3:409--412, 1965.

\bibitem{Suz74}
Masakazu Suzuki.
\newblock Propri\'et\'es topologiques des polyn\^omes de deux variables
  complexes, et automorphismes alg\'ebriques de l'espace {${\bf C}^{2}$}.
\newblock {\em J. Math. Soc. Japan}, 26:241--257, 1974.

\bibitem{Tib07a}
Mihai Tib\u{a}r.
\newblock {\em {Polynomials and vanishing cycles}}, volume 170 of {\em
  {Cambridge Tracts in Mathematics}}.
\newblock Cambridge University Press, Cambridge, 2007.

\bibitem{Viro}
Oleg~Ya. Viro.
\newblock Some integral calculus based on {E}uler characteristic.
\newblock In {\em Topology and geometry---{R}ohlin {S}eminar}, volume 1346 of
  {\em Lecture Notes in Math.}, pages 127--138. Springer, Berlin, 1988.

\bibitem{VuiTra84a}
H{\`a}~Huy Vui and L{\^e}~D{\~u}ng Tr{\'a}ng.
\newblock {Sur la topologie des polyn{\^o}mes complexes}.
\newblock {\em Acta Math. Vietnam.}, 9(1):21--32 (1985), 1984.

\end{thebibliography}
\end{document}